\newtheorem{theorem}{Theorem}
\newtheorem{lemma}{Lemma}
\newtheorem{proposition}{Proposition}
\begin{document}

\title{Cubic sublattices}
\author{M\'arton Horv\'ath}
\affil{Department of Geometry, Institute of Mathematics,\\
Budapest University of Technology and Economics,
Hungary\\\texttt{horvathm@math.bme.hu}}
\date{}
\maketitle
\let\thefootnote\relax\footnotetext{~\hspace{-2\parindent}\textit{2020 Mathematics Subject Classification:} Primary 52C07, Secondary 11H06.
\\\textit{Keywords and phrases:} cubic lattice, integer vector, three-dimensional lattice.}
\begin{abstract}
A sublattice of the three-dimensional integer lattice $\mathbb Z^3$ is called cubic sublattice if there exists a basis of the sublattice whose elements are pairwise orthogonal and of equal lengths.
We show that for an integer vector $\mathbf v\in\mathbb Z^3$ whose squared length is divisible by $d^2$, there exists a cubic sublattice containing $\mathbf v$ with edge length $d$. 
This improves one of the main result of a paper \cite{Goswick} of Goswick et al., where similar theorem was proved by using the decomposition theory of Hurwitz integral quaternions. We give an elementary proof heavily using cross product. This method allows us to characterize the cubic sublattices.
\end{abstract}

\section{Introduction}
Constructing lattice squares in the integer lattice  $\mathbb Z^2$ is easy as we can take a vector and its image under a $90$ degree rotation. The analogous problem in dimension three is to find lattice cubes in $\mathbb Z^3$. Although there are axis-parallel lattice cubes, still many other constructions exist, which are much more complicated to construct as there is no canonical rotation in dimension three.

One early result on lattice cubes is due to A.~S\'ark\" ozy \cite{Sarkozy}, who described some constructions of lattice cubes and determined the number of certain lattice cubes in 1961. This topic is still researched, E.~J.~Ionascu studied lattice cubes in dimensions 2, 3, 4 and determined their Ehrhart polynomial in his recently published paper \cite{Ionascu}.

If the edge length of a lattice cube is $d$, then its volume $d^3$ is the determinant of the edge vectors, so it is an integer, while the squared edge length $d^2$ is also an integer. This means that the edge length $d$ is an integer.
The following statement is an easy consequence of the description of the Pythagorean quadruples (see \cite{Carmichael} or \cite{Spira}).
If the length of a vector $\mathbf v\in\mathbb Z^3$ is an integer, then $\mathbf v$ can be extended to a lattice cube. An algorithmic proof is found in \cite{Parris}.
Similar questions are discussed in dimension four using Hurwitz integral quaternions by E.~W.~Kiss and P.~Kutas \cite{Kutas}.

A lattice cube can be easily extended to a cubic sublattice, which is a sublattice having such basis whose elements are pairwise orthogonal and of equal lengths. In this situation, it is naturally to ask which cubic sublattices contain a given vector $\mathbf v\in\mathbb Z^3$ not necessarily as a basis vector. If the edge length of the cubic sublattice is $d$, then $d^2$ must divide the squared length of $\mathbf v$. Our goal is to show that this condition is sufficient in the following sense.
\begin{theorem}\label{main}
For a vector $\mathbf v\in\mathbb Z^3$ whose squared length is divisible by $d^2$ for an integer $d$, there exists a cubic sublattice containing $\mathbf v$ with edge length $d$. If $\mathbf v$ is primitive, then this cubic sublattice is unique.
\end{theorem}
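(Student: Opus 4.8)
The plan is to recast everything in matrix form: a cubic sublattice of edge length $d$ is exactly $M\mathbb{Z}^3$ for an integer matrix $M=[\mathbf e_1,\mathbf e_2,\mathbf e_3]$ with $M^{\mathsf T}M=d^2 I$, and $\mathbf v\in M\mathbb{Z}^3$ holds precisely when $M^{-1}\mathbf v=\tfrac1{d^2}M^{\mathsf T}\mathbf v$ is integral, i.e. $\mathbf e_i\cdot\mathbf v\equiv 0\pmod{d^2}$ for $i=1,2,3$. Writing $\mathbf x=M^{-1}\mathbf v$ gives $|\mathbf v|^2=|M\mathbf x|^2=\mathbf x^{\mathsf T}M^{\mathsf T}M\mathbf x=d^2|\mathbf x|^2$, which re-derives the necessity $d^2\mid|\mathbf v|^2$. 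Before anything else I would record the number-theoretic fact that kills the only apparent obstruction: with $m=|\mathbf v|^2/d^2$ and $d=2^s t$ ($t$ odd), if $m$ had Legendre's forbidden shape $4^a(8b+7)$ then $|\mathbf v|^2=d^2m=4^{s+a}t^2(8b+7)$ would also be forbidden (since $t^2(8b+7)\equiv 7\pmod 8$), contradicting that $|\mathbf v|^2=v_1^2+v_2^2+v_3^2$ is a sum of three squares. Hence $m$ is automatically a sum of three squares and $|\mathbf x|^2=m$ is attainable.

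For existence I would reduce to prime edge length. If $M_p^{\mathsf T}M_p=p^2I$ with $\mathbf v\in M_p\mathbb Z^3$, then $\mathbf y:=M_p^{-1}\mathbf v$ is integral with $|\mathbf y|^2=|\mathbf v|^2/p^2$ and $(d/p)^2\mid|\mathbf y|^2$; factoring $d=p_1\cdots p_k$ and iterating yields $\mathbf v=M_{p_1}\cdots M_{p_k}\mathbf x$, where $M:=M_{p_1}\cdots M_{p_k}$ is integral with $M^{\mathsf T}M=d^2I$. So it suffices to treat a single prime $p$. The cases $p=2$ (where $4\mid|\mathbf v|^2$ forces every coordinate of $\mathbf v$ even, so $2\mathbb Z^3$ works) and $p\mid\mathbf v$ (take $p\mathbb Z^3$) are immediate, leaving the essential case $p$ odd and $\mathbf v\not\equiv\mathbf 0\pmod p$.

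Here the cross product enters. Reducing $M^{\mathsf T}M=p^2I$ modulo $p$ shows the columns $\bar{\mathbf e}_i\in\mathbb F_p^3$ are pairwise and self-orthogonal, hence span a totally isotropic subspace; as the form $x^2+y^2+z^2$ on $\mathbb F_p^3$ has Witt index $1$, they all lie on a single isotropic line, necessarily $[\bar{\mathbf v}]$ (isotropic because $p\mid|\mathbf v|^2$). This rigidity guides the construction. First I would produce a length-$p$ vector $\mathbf e_1$ with $\bar{\mathbf e}_1\parallel\bar{\mathbf v}$ and $\mathbf e_1\cdot\mathbf v\equiv0\pmod{p^2}$, exploiting identities such as $(\mathbf v\times\mathbf w)\times\mathbf v=|\mathbf v|^2\mathbf w-(\mathbf v\cdot\mathbf w)\mathbf v$ to manufacture candidates in the forced direction together with the three-square theorem. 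Since $|\mathbf e_1|=p$ is an integer, the Pythagorean-quadruple result quoted in the introduction extends $\mathbf e_1$ to an edge-$p$ cube, so the plane $\mathbf e_1^{\perp}\cap\mathbb Z^3$ contains edge-$p$ squares; setting $x_1=(\mathbf e_1\cdot\mathbf v)/p^2$, the residual $\mathbf v'=\mathbf v-x_1\mathbf e_1\in\mathbf e_1^{\perp}\cap\mathbb Z^3$ satisfies $p^2\mid|\mathbf v'|^2$. Completing the frame then amounts to finding an edge-$p$ square sublattice of that planar lattice through $\mathbf v'$ — the two-dimensional analogue, provable elementarily by Gaussian-integer divisibility — after which I take $\mathbf e_3=\tfrac1p(\mathbf e_1\times\mathbf e_2)$, the divisibility $p\mid\mathbf e_1\times\mathbf e_2$ holding because both $\bar{\mathbf e}_1,\bar{\mathbf e}_2$ lie on the line $[\bar{\mathbf v}]$, and $|\mathbf e_1\times\mathbf e_2|=p^2$ making $|\mathbf e_3|=p$. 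I expect the main obstacle to be that first step: producing $\mathbf e_1$ in the prescribed direction mod $p$ while simultaneously meeting the second-order congruence mod $p^2$. This is where the cross-product bookkeeping is heaviest and where the three-square input is genuinely used.

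For uniqueness, primitivity of $\mathbf v$ gives $\mathbf v\not\equiv\mathbf 0\pmod p$ at every prime, so the rigidity above applies at each $p\mid d$. Given two solutions $M,M'$ I would form $N=M^{-1}M'=\tfrac1{d^2}M^{\mathsf T}M'$, which is orthogonal with $N\mathbf x'=\mathbf x$; the two lattices coincide iff $N$ is integral, i.e. iff $\mathbf e_i\cdot\mathbf e_j'\equiv0\pmod{d^2}$ for all $i,j$. The congruence modulo $p$ is immediate since both $\bar{\mathbf e}_i$ and $\bar{\mathbf e}_j'$ are parallel to the isotropic $\bar{\mathbf v}$; promoting it to the full modulus $d^2$ needs the same second-order expansion (mod $p^2$, and mod the prime powers dividing $d$) used in the construction, and this is the delicate point of the argument. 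Once $N$ is shown to be an integral orthogonal matrix it must be a signed permutation, whence $M\mathbb Z^3=M'\mathbb Z^3$.
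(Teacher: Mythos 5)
Your plan correctly sets up the matrix formulation, the reduction to prime edge length, and the mod-$p$ rigidity (columns of $M$ reduce to a single isotropic line $[\bar{\mathbf v}]$, since $x^2+y^2+z^2$ over $\mathbb F_p$ has Witt index $1$) --- this last observation is essentially the mod-$p$ shadow of the paper's key cross-product lemma. But the proof has two genuine gaps, both at points you yourself flag as ``the main obstacle'' and ``the delicate point.'' First, the existence step hinges on producing a vector $\mathbf e_1$ of length exactly $p$ with $\bar{\mathbf e}_1\parallel\bar{\mathbf v} \pmod p$ and $\mathbf e_1\cdot\mathbf v\equiv0\pmod{p^2}$; you never construct it, only gesture at cross-product identities and the three-square theorem. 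This simultaneous first-order (direction mod $p$) and second-order (congruence mod $p^2$) condition is precisely the hard core of the theorem, and no amount of the surrounding bookkeeping substitutes for it. Second, even granting $\mathbf e_1$, the planar step is flawed as stated: the lattice $\mathbf e_1^\perp\cap\mathbb Z^3$ is \emph{not} isometric to the standard $\mathbb Z^2$ (its fundamental domain has area $p$, not $1$, when $\mathbf e_1$ is primitive), so ``Gaussian-integer divisibility'' does not apply to it; the two-dimensional analogue you invoke is a theorem about $\mathbb Z[i]$, not about an arbitrary planar section of $\mathbb Z^3$. The uniqueness argument has the same unresolved promotion problem: orthogonality of $\bar{\mathbf e}_i$ and $\bar{\mathbf e}_j'$ mod $p$ is immediate from the isotropic-line picture, but the needed congruence is mod $d^2$, and you leave that lift unproven.

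It is worth seeing how the paper sidesteps exactly these difficulties. Rather than building the cube one edge at a time (which forces the simultaneous-congruence construction you got stuck on), it defines the candidate lattice all at once by congruence conditions: $\Gamma(\mathbf v,d)=\{\mathbf a\in\mathbb Z^3\mid d\mid\mathbf a\times\mathbf v,\ d^2\mid(\mathbf a\times\mathbf v)\times\mathbf v\}$. It then shows this subgroup has index $d^3$, that all dot products of its elements are divisible by $d^2$ (pure cross-product identities), and finally finds the three orthogonal vectors of length $d$ by a packing argument: if every nonzero vector of $\Gamma(\mathbf v,d)$ had length at least $\sqrt2\,d$, balls of radius $\sqrt2\,d/2$ centered at lattice points would be disjoint yet have volume exceeding that of the fundamental parallelepiped $d^3$ --- a contradiction; a planar version of the same argument produces the second edge, and the cross product gives the third. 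Uniqueness then falls out for free: any cubic sublattice of edge $d$ containing $\mathbf v$ is contained in $\Gamma(\mathbf v,d)$ by the cross-product divisibility lemma, and equality of indices forces equality. Replacing your explicit construction of $\mathbf e_1$ by such a volume/pigeonhole argument inside a congruence-defined lattice is the missing idea that would close your gaps.
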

For example, the squared length of the vector $\mathbf v=(5,5,2)$ is $54$, which is divisible by $9$, so there exists a cubic sublattice containing $\mathbf v$ with edge length $3$, see Figure~\ref{fig:mainthm}.
\begin{figure}[h]
\centering
\includegraphics[width=12.5cm]{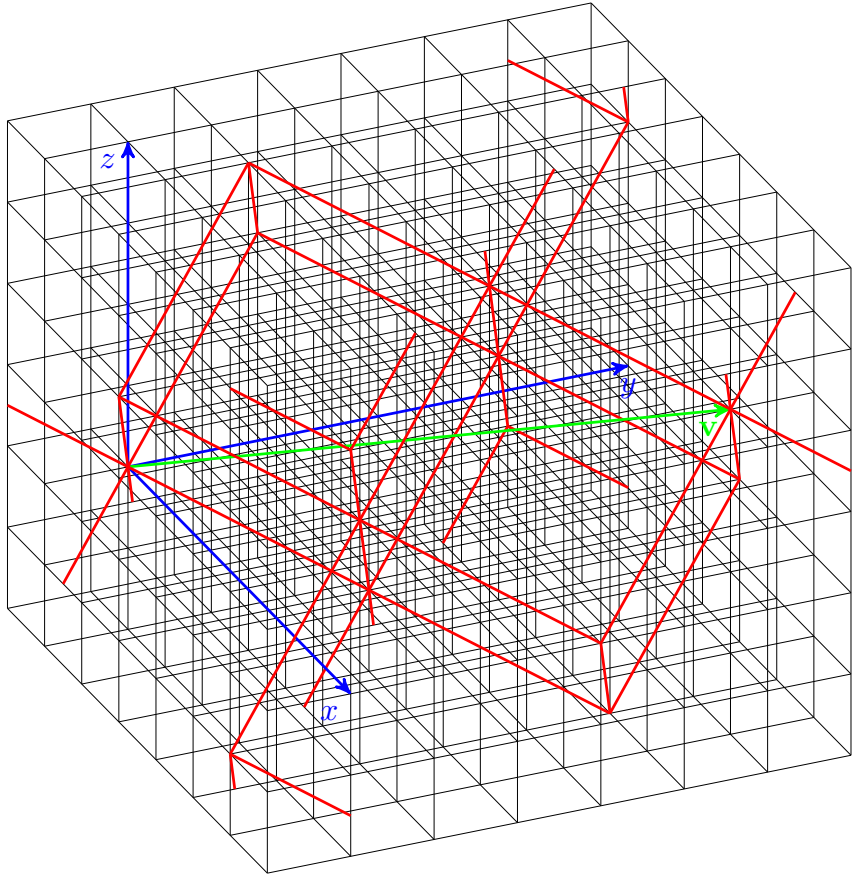}
\caption{The vector $\mathbf v=(5,5,2)$ is contained in the cubic sublattice with edge length $3$ generated by vectors $(-1,2,2), (2,-1,2), (2,2,-1)$. The figure shows the domain $[-1,6]\times[-1,6]\times[-2,4]$.}
\label{fig:mainthm}
\end{figure}

There might be several suitable cubic sublattices if $\mathbf v$ is not primitive. For instance, the vector $\mathbf v=(5,0,0)$ is contained in the cubic sublattices with edge length $d=5$ generated by bases $\{(5,0,0),(0,5,0),(0,0,5)\}$ and $\{(5,0,0),(0,3,4),(0,4,-3)\}$.

For the uniqueness, it is enough to assume that the greatest positive divisor $k$ of $\mathbf v$ and $d$ are coprime.
Indeed, we show that the primitive vector $\mathbf u=\mathbf v/k$ is also contained in the cubic sublattice $\Gamma$ given by the theorem for $\mathbf v$ when $d$ and $k$ are coprime. In the group $\mathbb Z^3/\Gamma$, the order of $\mathbf u\Gamma$ divides $k$ and the order $d^3$ of the group by Lagrange's theorem, hence the order of $\mathbf u\Gamma$ is $1$.

For the greatest possible value of $d$, i.e., in the case when $\|\mathbf v\|^2/d^2$ is square-free, Theorem~\ref{main} was proved in \cite{Goswick} by L.~M.~Goswick, E.~W.~Kiss, G.~Moussong and N.~Sim\'anyi using the decomposition theory of Hurwitz integral quaternions. Our proof builds solely on the structure of the three-dimensional Euclidean space and some basic facts about lattice geometry.

Finally, we give a number-theoretic corollary by considering the squared length of the vector $\mathbf v$. When a number $d^2m$ (where $d$ and $ m$ are integers) is a sum of three squares, then $m$ is also a sum of three squares. This is not surprising because Legendre's three-square theorem states that a natural number is a sum of three squares if and only if it is not of the form $4^n(8k+7)$. As a primitive vector remains primitive vector in the cubic sublattice, we can formulate a similar corollary: If an integer $d^2m$ is a sum of three coprime squares, then $m$ is also a sum of three coprime squares. We will discuss the converse of this claim after Theorem~\ref{thm:reverse} at the end of Section~\ref{characterization}.

In Section~\ref{preliminaries}, we introduce some definitions and propositions from lattice geometry. Section~\ref{proof} is devoted to the proof of Theorem~\ref{main}. Then we characterize the cubic sublattices of $\mathbb Z^3$ and we prove a kind of reverse theorem in Section~\ref{characterization}. 
Finally, we investigate cubic sublattices as a partial ordered set in Section~\ref{poset}.

\section{Preliminaries}\label{preliminaries}
In this section, we summarize some basic definitions and statements without proof. We refer to \cite{Cassels} for more details on lattice geometry. After that we prove some easy propositions which will be used later.

Let $\Lambda$ be an $n$-dimensional lattice in a real vector space. We say that a subgroup $K<\Lambda$ is a sublattice if $K$ is also an $n$-dimensional lattice. This is equivalent to the finiteness of the index of $K$ in $\Lambda$. If some linearly independent vectors in $\Lambda$ form a basis in the intersection of $\Lambda$ and the linear subspace generated by them, then these vectors can be extended to a basis of $\Lambda$. The parallelepiped generated by the basis of the lattice is called fundamental parallelepiped. When a scalar product or just a volume form is given on the vector space, the volume of the fundamental parallelepiped does not depend on the choice of the basis of the lattice. For a sublattice $K\subseteq\Lambda$, the ratio of the volumes of the fundamental parallelepipeds is equal to the index of $K$ in $\Lambda$ as a subgroup.

We say that a vector $\mathbf v\in \Lambda$ is divisible by a positive integer $k$ if there exists a vector $\mathbf u\in \Lambda$ such that $\mathbf v=k\mathbf u$. A vector $\mathbf v\in \Lambda$ is called primitive if there is not any positive integer $k\neq1$ which divides $\mathbf v$. For a non-zero vector $\mathbf v\in \Lambda$, there exists a unique primitive vector $\mathbf u$ and a unique positive integer $k$ such that $\mathbf v=k\mathbf u$. In this case, we say that $k$ is the greatest divisor of $\mathbf v$. 
The vectors that are divisible by a positive integer $k$ form a sublattice in $\Lambda$, it will be denoted by $k\Lambda$. The index of $k\Lambda$ in $\Lambda$ is $k^n$.

Fixing a basis $\{\mathbf e^1,\dots,\mathbf e^n\}$ of $\Lambda$, we can identify $\Lambda$ with $\mathbb Z^n$ by using the coordinates $(v_1,\dots,v_n)$ of a vector $\mathbf v\in\Lambda$ with respect to this basis. 
A vector is divisible by $k$ if and only if its coordinates so are. A vector is primitive if and only if its coordinates are coprime. The standard embedding $\mathbb Z^n\subseteq \mathbb R^n$ and the Euclidean vector space structure of $\mathbb R^n$ define the dot product on $\Lambda$. In particular, the perpendicularity of two vectors of $\Lambda$ and the length $\|\mathbf v\|$ of a vector $\mathbf v\in\Lambda$ are defined. In this case, the volume of the fundamental parallelepiped is the determinant of the basis vectors.

From now, we consider the case $n=3$. The advantage of this dimension is the applicability of the cross product. 
We say that a sublattice $\Gamma$ of the standard lattice $\mathbb Z^3$ is a cubic sublattice if there exists a basis of $\Gamma$ whose elements are pairwise orthogonal and of equal lengths. Such basis is called cubic basis, and their common length $d$ is the edge length. When we have a cubic sublattice $\Gamma\subset\mathbb Z^3$, we can identify $\Gamma$ with $\mathbb Z^3$, so we can measure the vectors of $\Gamma$ with respect to this identifying.

For a non-zero vector $\mathbf v\in\mathbb Z^3$, the set of the orthogonal vectors to $\mathbf v$ will be denoted by $\mathbf v^\perp$.

\begin{proposition}\label{perp-lattice}
For a non-zero vector $\mathbf v=(v_1,v_2,v_3)\in\mathbb Z^3$, the set $\mathbf v^\perp$ is a two-dimensional lattice.
\end{proposition}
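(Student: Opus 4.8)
The plan is to realize $\mathbf v^\perp$ as the kernel of a group homomorphism and then to pin down its rank from above and below. The map $\varphi\colon\mathbb Z^3\to\mathbb Z$ defined by $\varphi(\mathbf w)=\mathbf v\cdot\mathbf w$ is a homomorphism of abelian groups whose kernel is exactly $\mathbf v^\perp$. In particular $\mathbf v^\perp$ is a subgroup of $\mathbb Z^3$, hence a free abelian group of rank at most $3$, and being a subgroup of a discrete set it is automatically discrete. It remains to show that its rank is exactly $2$ and that it spans the plane orthogonal to $\mathbf v$.

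For the upper bound I would note that every vector of $\mathbf v^\perp$ lies in the real plane $P=\{\mathbf x\in\mathbb R^3:\mathbf v\cdot\mathbf x=0\}$, which is two-dimensional because $\mathbf v\neq\mathbf 0$. Hence $\mathbf v^\perp$ spans a subspace of dimension at most $2$, so its rank is at most $2$.

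For the lower bound I would produce two linearly independent integer vectors orthogonal to $\mathbf v$ using the cross product, in the spirit of the paper. The three vectors $\mathbf v\times(1,0,0)=(0,v_3,-v_2)$, $\mathbf v\times(0,1,0)=(-v_3,0,v_1)$, and $\mathbf v\times(0,0,1)=(v_2,-v_1,0)$ all have integer entries and are orthogonal to $\mathbf v$, so they belong to $\mathbf v^\perp$. Since $\mathbf v\times(\cdot)$ is a linear map whose image is precisely $P$ and the standard basis vectors span $\mathbb R^3$, these three vectors span $P$; as $P$ is two-dimensional, at least two of them are linearly independent. Therefore the rank of $\mathbf v^\perp$ is at least $2$.

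Combining the two bounds gives rank exactly $2$, and a rank-$2$ discrete subgroup contained in the two-dimensional plane $P$ necessarily spans $P$, so it is a two-dimensional lattice there. The only point requiring a little care is the passage from \emph{rank two} to \emph{full two-dimensional lattice in $P$}, which holds because a discrete subgroup whose rank equals the dimension of its real span is a lattice in that span; this is the mildest step and presents no genuine obstacle.
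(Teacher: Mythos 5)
Your proof is correct and follows essentially the same route as the paper: both arguments produce two linearly independent elements of $\mathbf v^\perp$ by taking cross products of $\mathbf v$ with standard basis vectors, the only cosmetic difference being that the paper assumes $v_3\neq 0$ and checks independence of $\mathbf e^1\times\mathbf v$, $\mathbf e^2\times\mathbf v$ directly, while you use all three cross products and a dimension count on the image of the map $\mathbf x\mapsto\mathbf v\times\mathbf x$. The extra bookkeeping about ranks and the passage from rank two to a lattice in the plane $P$ is sound and just makes explicit what the paper treats as obvious.
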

\begin{proof}
Obviously, $\mathbf v^\perp$ is a discrete subgroup. Its dimension is at most $2$, we show that it is exactly $2$. We can assume that $v_3\neq0$, so none of the linear combinations of $\mathbf e^1,\mathbf e^2$ is parallel to $\mathbf v$. In this case, the linearity of the cross product yields that the vectors $\mathbf e^1\times\mathbf v,\mathbf e^2\times\mathbf v\in\mathbf v^\perp$ are linearly independent.
\end{proof}

\begin{proposition}\label{perp_area}
For a primitive vector $\mathbf v\in\mathbb Z^3$, the area of the fundamental parallelogram in $\mathbf v^\perp$ is equal to the length of $\mathbf v$.
\end{proposition}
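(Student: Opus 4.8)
The plan is to relate the area of the fundamental parallelogram of $\mathbf v^\perp$ to the index $[\mathbb{Z}^3 : L]$ of an auxiliary three-dimensional sublattice $L$, and then to compute that index separately by exploiting the primitivity of $\mathbf v$. Comparing the two values of the index will yield the claim.

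First I would pick a basis $\{\mathbf a,\mathbf b\}$ of the two-dimensional lattice $\mathbf v^\perp$, which exists by Proposition~\ref{perp-lattice}; the area of its fundamental parallelogram is $\|\mathbf a\times\mathbf b\|$. Since $\mathbf v\neq\mathbf 0$ is orthogonal to both $\mathbf a$ and $\mathbf b$, the three vectors are linearly independent, so $L=\mathbb{Z}\mathbf a+\mathbb{Z}\mathbf b+\mathbb{Z}\mathbf v$ is a three-dimensional sublattice of $\mathbb{Z}^3$. The volume of its fundamental parallelepiped is $|\det(\mathbf a,\mathbf b,\mathbf v)|=|(\mathbf a\times\mathbf b)\cdot\mathbf v|$, and because $\mathbf a\times\mathbf b$ is orthogonal to the plane $\mathbf v^\perp$ it is parallel to $\mathbf v$, so this determinant equals $\|\mathbf a\times\mathbf b\|\cdot\|\mathbf v\|$. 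As the covolume of $\mathbb{Z}^3$ is $1$, the index $[\mathbb{Z}^3:L]$ equals this quantity, namely the area of the fundamental parallelogram of $\mathbf v^\perp$ times $\|\mathbf v\|$.

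Next I would compute the same index through the group homomorphism $\phi\colon\mathbb{Z}^3\to\mathbb{Z}$, $\mathbf x\mapsto\mathbf x\cdot\mathbf v$. Its kernel is exactly $\mathbf v^\perp$, and since $\mathbf v$ is primitive the greatest common divisor of its coordinates is $1$, so $\phi$ is surjective and $\mathbb{Z}^3/\mathbf v^\perp\cong\mathbb{Z}$. Under this isomorphism $L/\mathbf v^\perp$ corresponds to $\phi(L)=\phi(\mathbb{Z}\mathbf v)=(\mathbf v\cdot\mathbf v)\mathbb{Z}=\|\mathbf v\|^2\mathbb{Z}$, whence $[\mathbb{Z}^3:L]=\|\mathbf v\|^2$. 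Comparing the two expressions for the index gives (area)$\,\cdot\|\mathbf v\|=\|\mathbf v\|^2$, so the area equals $\|\mathbf v\|$, as claimed.

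The only place where primitivity is genuinely used — and the main point to get right — is the surjectivity of $\phi$: if $\mathbf v$ were divisible by some $k>1$, the image of $\phi$ would be $k\mathbb{Z}$ and the index would come out as $\|\mathbf v\|^2/k$ instead, so I would invoke precisely the hypothesis that the coordinates of $\mathbf v$ are coprime. The geometric ingredients, the identity $\det(\mathbf a,\mathbf b,\mathbf v)=(\mathbf a\times\mathbf b)\cdot\mathbf v$ and the parallelism of $\mathbf a\times\mathbf b$ with $\mathbf v$, are both elementary.
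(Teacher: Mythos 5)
Your proof is correct, but it follows a genuinely different route from the paper's. The paper stays inside the plane $\mathbf v^\perp$ and argues with cross products: writing $\tilde{\mathbf v}$ for the cross product of a lattice basis of $\mathbf v^\perp$ (whose length is the area in question), it notes that $\tilde{\mathbf v}$ is an integer multiple $m\mathbf v$ of the primitive vector $\mathbf v$, and then computes $(\mathbf e^i\times\mathbf v)\times(\mathbf e^j\times\mathbf v)=\pm v_k\mathbf v$ for the sublattices of $\mathbf v^\perp$ generated by pairs $\mathbf e^i\times\mathbf v$; since each such cross product must be an integer multiple of $\tilde{\mathbf v}$, the factor $m$ divides each coordinate $v_k$, so $m=\pm 1$ by coprimality. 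You instead count the index of the auxiliary lattice $L=\mathbf v^\perp+\mathbb{Z}\mathbf v$ in two ways: once as a covolume, giving $(\text{area})\cdot\|\mathbf v\|$, and once group-theoretically via the homomorphism $\phi(\mathbf x)=\mathbf x\cdot\mathbf v$, whose surjectivity (Bézout, i.e.\ primitivity) identifies $\mathbb{Z}^3/\mathbf v^\perp$ with $\mathbb{Z}$ and $L/\mathbf v^\perp$ with $\|\mathbf v\|^2\mathbb{Z}$. Both uses of primitivity are essential and both arguments are complete; the trade-off is this: your argument is the standard, more conceptual one and generalizes immediately to primitive vectors in $\mathbb{Z}^n$ (the orthogonal lattice of a primitive vector has determinant $\|\mathbf v\|$), while the paper's argument is deliberately tied to the cross-product machinery it develops, and yields as a byproduct the sharper normalization $\tilde{\mathbf v}=\pm\mathbf v$ — i.e.\ the cross product of \emph{any} basis of $\mathbf v^\perp$ is exactly $\pm\mathbf v$ — a fact in the same spirit as the surjectivity argument in Proposition~\ref{equivalent} and the computations in Section~\ref{proof}.
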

\begin{proof}
The area of the  fundamental parallelogram equals the length of the cross product $\tilde{\mathbf v}$ of the generating vectors. As $\mathbf v$ is primitive, $\tilde{\mathbf v}$ is a multiple of $\mathbf v$.
Every pair of the vectors $\mathbf e^i\times\mathbf v$ ($i=1,2,3$) generates a sublattice of $\mathbf v^\perp$. The cross product of the generators is
\[(\mathbf e^i\times\mathbf v)\times(\mathbf e^j\times\mathbf v)=\left(\mathbf v\cdot(\mathbf e^i\times\mathbf e^j)\right)\mathbf v=\pm v_k\mathbf v,\]
where $i,j,k\in\{1,2,3\}$ are distinct indices. These vectors are multiples of $\tilde{\mathbf v}$. As $\mathbf v$ is primitive, the coefficients $v_1,v_2,v_3$ are coprime, hence $\tilde{\mathbf v}=\pm\mathbf v$.
\end{proof}

Fixing a primitive vector $\mathbf v$, let the vectors $\mathbf a,\mathbf b\in\mathbb Z^3$ be equivalent if their difference is a multiple of $\mathbf v$. Then the cross product with $\mathbf v$ is a well-defined map $\Phi_{\mathbf v}$ from the equivalence classes to $\mathbf v^\perp$.
\begin{proposition}\label{equivalent}
The map $\Phi_{\mathbf v}$ is a bijection.
\end{proposition}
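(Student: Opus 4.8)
The plan is to regard $\Phi_{\mathbf v}$ as the map induced on equivalence classes by the group homomorphism $\mathbf a\mapsto\mathbf a\times\mathbf v$ from $\mathbb Z^3$ to $\mathbf v^\perp$, and to prove injectivity and surjectivity separately. Well-definedness is already noted in the text preceding the statement, so the two tasks are to identify the fibers and to show that the image fills out all of $\mathbf v^\perp$.

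For injectivity I would compute the kernel. If $\mathbf a\times\mathbf v=\mathbf b\times\mathbf v$, then $(\mathbf a-\mathbf b)\times\mathbf v=\mathbf 0$, so $\mathbf a-\mathbf b$ is parallel to $\mathbf v$. Being a lattice vector on the line $\mathbb R\mathbf v$ through a primitive vector, it must be an integer multiple of $\mathbf v$, so $\mathbf a$ and $\mathbf b$ lie in the same equivalence class. The only property used here is primitivity, which guarantees that the lattice points of $\mathbb R\mathbf v$ are exactly the integer multiples of $\mathbf v$.

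Surjectivity is the main obstacle, because the naive preimage $\frac{1}{\|\mathbf v\|^2}(\mathbf v\times\mathbf w)$ of a given $\mathbf w\in\mathbf v^\perp$ (which satisfies $\bigl(\frac{1}{\|\mathbf v\|^2}(\mathbf v\times\mathbf w)\bigr)\times\mathbf v=\mathbf w$ by the expansion rule for the triple cross product) need not be integral. To control integrality I would argue by index. The image $L:=\Phi_{\mathbf v}(\mathbb Z^3)$ is the sublattice of $\mathbf v^\perp$ generated by $\mathbf e^1\times\mathbf v,\ \mathbf e^2\times\mathbf v,\ \mathbf e^3\times\mathbf v$, since these are the images of a generating set of $\mathbb Z^3$ and the cross product is linear. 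From the computation already performed in the proof of Proposition~\ref{perp_area}, the parallelogram spanned by $\mathbf e^i\times\mathbf v$ and $\mathbf e^j\times\mathbf v$ has area $\|(\mathbf e^i\times\mathbf v)\times(\mathbf e^j\times\mathbf v)\|=|v_k|\,\|\mathbf v\|$, where $k$ is the remaining index; dividing by the fundamental area $\|\mathbf v\|$ of $\mathbf v^\perp$ (again Proposition~\ref{perp_area}), the sublattice generated by this pair has index $|v_k|$ in $\mathbf v^\perp$ whenever $v_k\neq0$.

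Finally I would combine these indices. Each such pair-sublattice is contained in $L$, so $[\mathbf v^\perp:L]$ divides $|v_k|$ for every $k$ with $v_k\neq0$, and hence divides $\gcd(|v_1|,|v_2|,|v_3|)$, which equals $1$ because $\mathbf v$ is primitive. Therefore $[\mathbf v^\perp:L]=1$, that is $L=\mathbf v^\perp$, and $\Phi_{\mathbf v}$ is surjective; together with injectivity this gives the claimed bijection. The only point needing care is that at least one $v_k\neq0$, so that a genuine rank-two pair-sublattice of finite index exists, which holds as $\mathbf v\neq\mathbf 0$.
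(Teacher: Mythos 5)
Your proof is correct, but the surjectivity argument is genuinely different from the paper's. The injectivity part coincides with the paper's (the kernel of $\mathbf a\mapsto\mathbf a\times\mathbf v$ is exactly $\mathbb Z\mathbf v$, using primitivity). For surjectivity, the paper constructs an explicit preimage: writing the target as $k\mathbf u$ with $\mathbf u$ primitive, it notes that $\mathbf v$ is a primitive vector of the two-dimensional lattice $\mathbf u^\perp$, extends it to a basis $\{\mathbf v,\mathbf w\}$ of $\mathbf u^\perp$, and concludes via Proposition~\ref{perp_area} (applied to $\mathbf u$, not to $\mathbf v$) that $\mathbf w\times\mathbf v=\pm\mathbf u$, so $\pm k\mathbf w$ is a preimage of $k\mathbf u$. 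You instead argue globally by covolume: the image $L$ contains each pair-sublattice generated by $\mathbf e^i\times\mathbf v$ and $\mathbf e^j\times\mathbf v$, whose index in $\mathbf v^\perp$ is $|v_k|$ when $v_k\neq0$, so by multiplicativity of the index along $L_{ij}\subseteq L\subseteq\mathbf v^\perp$ the index $[\mathbf v^\perp:L]$ divides $\gcd(|v_1|,|v_2|,|v_3|)=1$. Your route reuses the coprimality trick from the paper's own proof of Proposition~\ref{perp_area} and needs that proposition only for $\mathbf v$ itself, avoiding both the basis-extension fact and its application to a second vector; the trade-off is that it is non-constructive, whereas the paper's argument produces an explicit preimage of every target vector. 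Your handling of the degenerate case (only nonzero $v_k$ give rank-two pair-sublattices, and at least one exists since $\mathbf v\neq\mathbf 0$) is also correct, so both proofs are complete.
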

\begin{proof}
If the cross products with $\mathbf v$ are equal for two vectors of $\mathbb Z^3$, then their difference is parallel to $\mathbf v$. This means that $\Phi_{\mathbf v}$ is injective.

For the surjectivity, consider a vector $k\mathbf u\in\mathbf v^\perp$, where $\mathbf u$ is primitive and $k$ is an integer. The vector $\mathbf v$ is primitive in $\mathbf u^\perp$, so there exists a vector $\mathbf w$ such that $\{\mathbf v,\mathbf w\}$ is a basis of $\mathbf u^\perp$. Then the cross product $\mathbf w\times \mathbf v$ is $\pm\mathbf u$, so $\pm k\mathbf w\times \mathbf v=k\mathbf u$, hence $\Phi_{\mathbf v}$ is surjective.
\end{proof}

\section{The proof of Theorem~\ref{main}}\label{proof}
 
It is enough to prove the theorem for primitive vector $\mathbf v$.
Indeed, if $\mathbf v=k\mathbf u$ for a primitive vector $\mathbf u\in\mathbb Z^3$, and $d^2$ divides the squared length $\|\mathbf v\|^2=k^2\|\mathbf u\|^2$, then there exists a decomposition $d=d_1d_2$ such that $d_1$ divides $k$ and $d_2^2$ divides $\|\mathbf u\|^2$. Applying the theorem for $d_2$ and $\mathbf u$, we get a cubic sublattice $\Gamma$. Then the cubic sublattice $d_1\Gamma$ has edge length $d_1d_2=d$ and contains $d_1\mathbf u$, therefore also $\mathbf v$.

First we prove the uniqueness part of the theorem in case $\mathbf v$ is primitive. Suppose that we have a cubic sublattice $\Gamma$ containing $\mathbf v$ with edge length $d$.
The first lemma is the key observation.
\begin{lemma}\label{lem:basic}
If $\mathbf a,\mathbf b\in \Gamma$, then $\mathbf a\times\mathbf b$ is divisible by $d$, and $\mathbf a\times\mathbf b/d\in \Gamma$.
\end{lemma}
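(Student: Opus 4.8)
The plan is to work directly with a cubic basis $\{\mathbf g^1,\mathbf g^2,\mathbf g^3\}$ of $\Gamma$ and to exploit the bilinearity of the cross product. Writing $\mathbf a=\sum_i a_i\mathbf g^i$ and $\mathbf b=\sum_j b_j\mathbf g^j$ with integer coefficients $a_i,b_j$, bilinearity gives
\[\mathbf a\times\mathbf b=\sum_{i,j}a_ib_j\,(\mathbf g^i\times\mathbf g^j).\]
Since $\mathbf g^i\times\mathbf g^i=\mathbf 0$, only the off-diagonal terms survive, so the whole statement reduces to understanding the cross products $\mathbf g^i\times\mathbf g^j$ for distinct indices $i,j$.

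The key step is to show that for distinct $i,j,k$ one has $\mathbf g^i\times\mathbf g^j=\pm d\,\mathbf g^k$. First, $\mathbf g^i\times\mathbf g^j$ is orthogonal to both $\mathbf g^i$ and $\mathbf g^j$; since these two span the orthogonal complement of $\mathbf g^k$, the cross product must be a scalar multiple of $\mathbf g^k$. Second, because $\mathbf g^i$ and $\mathbf g^j$ are orthogonal and both of length $d$, the magnitude of the cross product is $\|\mathbf g^i\|\,\|\mathbf g^j\|=d^2$, and as $\|\mathbf g^k\|=d$ the scalar is $\pm d$, as claimed. The sign is $+$ or $-$ according to whether the cubic basis is positively or negatively oriented, but only the factor $d$ is relevant here. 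This is the only place where the orthogonality and equal-length hypotheses enter, and it is the crux of the argument; the main (minor) obstacle is simply to confirm that the cross product lands on $\pm d\,\mathbf g^k$ \emph{exactly}, which is what the direction and magnitude computation above secures.

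Substituting back, each surviving term $a_ib_j\,(\mathbf g^i\times\mathbf g^j)$ equals $\pm d\,a_ib_j\,\mathbf g^k$ with $k$ the remaining index, so
\[\mathbf a\times\mathbf b=d\,(c_1\mathbf g^1+c_2\mathbf g^2+c_3\mathbf g^3)\]
for suitable integers $c_1,c_2,c_3$. Thus $\mathbf a\times\mathbf b\in d\Gamma$, which is exactly the assertion that $\mathbf a\times\mathbf b$ is divisible by $d$ and that $\mathbf a\times\mathbf b/d\in\Gamma$.
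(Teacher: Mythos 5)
Your proof is correct and takes essentially the same approach as the paper: its one-line argument---computing the cross product with respect to the Euclidean structure of $\Gamma\cong\mathbb Z^3$ and obtaining $\mathbf a\times\mathbf b/d$---is exactly your identity $\mathbf g^i\times\mathbf g^j=\pm d\,\mathbf g^k$ combined with bilinearity. Your version simply spells out explicitly the computation that the paper compresses into a single sentence.
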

\begin{proof}
Computing the cross product of $\mathbf a$ and $\mathbf b$ with respect to the Euclidean structure of $\Gamma\cong\mathbb Z^3\subset\mathbb R^3$, we get $\mathbf a\times\mathbf b/d$, which implies the statement.
\end{proof}
Lemma~\ref{lem:basic} yields that $\mathbf a\times\mathbf v$ is divisible by $d$ for $\mathbf a\in\Gamma$, so consider the following subset of $\mathbf v^\perp$
\[M(\mathbf v,d)=\{\mathbf a\in\mathbf v^\perp\mid \mathbf a\times\mathbf v \text{ is divisible by }d\}.\]
We have that $M(\mathbf v,d)$ contains the intersection $\mathbf v^\perp\cap \Gamma$. Later we will see that these sets are coincide.
By the linearity of the cross product, $M(\mathbf v,d)$ is a subgroup. The next lemma shows that it is a sublattice.

\begin{lemma}
The index of $M(\mathbf v,d)$ in $\mathbf v^\perp$ is $d$.
\end{lemma}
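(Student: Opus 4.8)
The plan is to transport the problem through the isomorphism supplied by Proposition~\ref{equivalent} and reduce it to a one-line computation in $\mathbb Z/d\mathbb Z$. First I would observe that the cross product with $\mathbf v$ is linear in the first argument, so the bijection $\Phi_{\mathbf v}$ of Proposition~\ref{equivalent} is in fact a group isomorphism $\mathbb Z^3/\mathbb Z\mathbf v\xrightarrow{\sim}\mathbf v^\perp$, $[\mathbf b]\mapsto\mathbf b\times\mathbf v$ (its kernel is exactly the multiples of $\mathbf v$ because $\mathbf v$ is primitive). Computing indices is then allowed to move freely across $\Phi_{\mathbf v}$, so it suffices to determine the index of the preimage $\Phi_{\mathbf v}^{-1}\big(M(\mathbf v,d)\big)$ in $\mathbb Z^3/\mathbb Z\mathbf v$.

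The key step is to rewrite the defining condition of $M(\mathbf v,d)$ in terms of $\mathbf b$. Writing $\mathbf a=\mathbf b\times\mathbf v$ and applying the vector triple product identity gives $\mathbf a\times\mathbf v=(\mathbf b\times\mathbf v)\times\mathbf v=(\mathbf b\cdot\mathbf v)\mathbf v-\|\mathbf v\|^2\mathbf b$. Under the standing hypothesis $d^2\mid\|\mathbf v\|^2$, so in particular $d\mid\|\mathbf v\|^2$, the term $\|\mathbf v\|^2\mathbf b$ vanishes modulo $d$, whence $\mathbf a\in M(\mathbf v,d)$ is equivalent to $d\mid(\mathbf b\cdot\mathbf v)\mathbf v$. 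Since $\mathbf v$ is primitive, its coordinates are coprime, and $d\mid(\mathbf b\cdot\mathbf v)v_i$ for every $i$ forces $d\mid\mathbf b\cdot\mathbf v$. Thus membership in $M(\mathbf v,d)$ translates to the single congruence $d\mid\mathbf b\cdot\mathbf v$. Because $d\mid\|\mathbf v\|^2=\mathbf v\cdot\mathbf v$, this congruence depends only on the class $[\mathbf b]$, as it must.

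Finally I would recognize $\{[\mathbf b]\mid d\mid\mathbf b\cdot\mathbf v\}$ as the kernel of the homomorphism $\mathbb Z^3/\mathbb Z\mathbf v\to\mathbb Z/d\mathbb Z$ sending $[\mathbf b]$ to $\mathbf b\cdot\mathbf v\bmod d$. This map is surjective: as $\mathbf b$ ranges over $\mathbb Z^3$ the value $\mathbf b\cdot\mathbf v=b_1v_1+b_2v_2+b_3v_3$ runs through $\gcd(v_1,v_2,v_3)\mathbb Z=\mathbb Z$ because $\mathbf v$ is primitive, so it attains every residue modulo $d$. Hence the kernel has index $d$ in $\mathbb Z^3/\mathbb Z\mathbf v$, and transporting back through the isomorphism $\Phi_{\mathbf v}$ yields $[\mathbf v^\perp:M(\mathbf v,d)]=d$; in particular $M(\mathbf v,d)$ is indeed a sublattice.

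The only real obstacle is the translation step. One must account for the fact that $\Phi_{\mathbf v}$ is defined only on classes modulo $\mathbf v$, which is precisely why $d\mid\|\mathbf v\|^2$ is needed: both to kill $\|\mathbf v\|^2\mathbf b$ and to make the congruence $d\mid\mathbf b\cdot\mathbf v$ well defined on classes. The passage from $d\mid(\mathbf b\cdot\mathbf v)\mathbf v$ to $d\mid\mathbf b\cdot\mathbf v$ is where primitivity of $\mathbf v$ is used in an essential way. Everything else is bookkeeping with the isomorphism and a surjectivity count, and one can sanity-check the argument on $\mathbf v=(5,5,2)$, $d=3$, where $\mathbf b\cdot\mathbf v\equiv 2(b_1+b_2+b_3)\pmod 3$ is surjective onto $\mathbb Z/3\mathbb Z$, giving index $3$.
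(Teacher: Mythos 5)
Your proof is correct, and it takes a genuinely different route from the paper's. You upgrade the bijection $\Phi_{\mathbf v}$ of Proposition~\ref{equivalent} to a group isomorphism $\mathbb Z^3/\mathbb Z\mathbf v\cong\mathbf v^\perp$ (legitimate: the cross product is additive in the first argument, and its kernel is exactly $\mathbb Z\mathbf v$ by primitivity), pull $M(\mathbf v,d)$ back through it, and use the expansion $(\mathbf b\times\mathbf v)\times\mathbf v=(\mathbf b\cdot\mathbf v)\mathbf v-\|\mathbf v\|^2\mathbf b$ together with $d\mid\|\mathbf v\|^2$ and $\gcd(v_1,v_2,v_3)=1$ to identify the preimage of $M(\mathbf v,d)$ with the kernel of the surjective homomorphism $[\mathbf b]\mapsto\mathbf b\cdot\mathbf v\bmod d$, so the index is $d$ at once. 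The paper instead proves the two divisibilities separately: $d\mid i$ by exhibiting $\mathbf u=\mathbf t\times\mathbf v$ (with $\mathbf t\cdot\mathbf v=1$) whose multiples $\mathbf u,\dots,(d-1)\mathbf u$ avoid $M(\mathbf v,d)$, and $i\mid d$ by constructing explicit orthogonal pairs $\mathbf r^j,\mathbf s^j$ inside $M(\mathbf v,d)$, computing the index of the sublattice they span via areas and Proposition~\ref{perp_area}, and finishing with a gcd identity plus a separate parity argument when $v_1,v_2,v_3$ are all odd. Your argument is shorter and structurally cleaner: it removes the area computations, the gcd manipulation, and the all-odd special case, and it exposes the conceptual content of the lemma, namely that under $\Phi_{\mathbf v}$ the sublattice $M(\mathbf v,d)$ is precisely the congruence condition $\mathbf b\cdot\mathbf v\equiv0\pmod d$; it also shows that only $d\mid\|\mathbf v\|^2$ (not the full $d^2\mid\|\mathbf v\|^2$) is needed. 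What the paper's hands-on version buys is that it stays entirely within the lattice-geometric toolkit it has just set up (fundamental parallelograms and their areas) and produces explicit vectors of $M(\mathbf v,d)$, but as a proof of the index statement it is strictly more laborious. The two points you rightly flag as essential --- that transporting indices requires $\Phi_{\mathbf v}$ to be an isomorphism of groups rather than a mere bijection, and that $d\mid\mathbf v\cdot\mathbf v$ is what makes the congruence well defined on classes modulo $\mathbf v$ --- are exactly the places where a sloppier version of this argument would break, and you handle both.
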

\begin{proof}
Let $i$ denote the index of $M(\mathbf v,d)$ in $\mathbf v^\perp$.
Firstly we prove that $i$ is divisible by $d$.
As the vector $\mathbf v=(v_1,v_2,v_3)$ is primitive, there exist integers $t_1,t_2,t_3$ such that $t_1v_1+t_2v_2+t_3v_3=1$. Consider the vector $\mathbf t=(t_1,t_2,t_3)$, and put $\mathbf u=\mathbf t\times\mathbf v\in\mathbf v^\perp$. Then
\[\mathbf u\times\mathbf v=(\mathbf t\times\mathbf v)\times\mathbf v=(\mathbf t\cdot\mathbf v)\mathbf v-(\mathbf v\cdot\mathbf v)\mathbf t=\mathbf v-\ell^2\mathbf t=\left(v_1-\ell^2t_1, v_2-\ell^2t_2,v_3-\ell^2t_3\right).\]
This vector is not divisible by any non-unit divisor of $d$, otherwise such a divisor would divide $\ell^2$ and $v_1,v_2,v_3$, which would contradict the primitiveness of $\mathbf v$.
Thus, the vectors $\mathbf u\times\mathbf v, 2\mathbf u\times\mathbf v,\dots, (d-1)\mathbf u\times\mathbf v$ are not divisible by $d$, so the vectors $\mathbf u, 2\mathbf u,\dots, (d-1)\mathbf u$ are not contained in $M(\mathbf v,d)$. The vector $d\mathbf u$ belongs to $M(\mathbf v,d)$ by definition. In the group $\langle\mathbf u\rangle$ generated by $\mathbf u\in\mathbf v^\perp$, the subgroup $\langle\mathbf u\rangle\cap M(\mathbf v,d)$ has index $d$, hence $i$ is a multiple of $d$ by Noether's isomorphism theorem.

Now we prove that $i$ divides $d$.
Consider the vectors
\begin{align*}
\mathbf r^1&=d\mathbf e^1\times\mathbf v=(0,-dv_3,dv_2),\\
\mathbf s^1&=(\mathbf r^1\times\mathbf v)/d=(\mathbf e^1\times\mathbf v)\times\mathbf v=\left(-v_2^2-v_3^2,v_1v_2,v_1v_3\right).
\end{align*}
 We have that $\mathbf r^1\in M(\mathbf v,d)$ and $\mathbf r^1\perp\mathbf s^1$. As $\mathbf r^1$ is perpendicular to $\mathbf v$, we have 
\[\mathbf s^1\times\mathbf v=\frac1d(\mathbf r^1\times\mathbf v)\times\mathbf v=-\frac{\ell^2}d\mathbf r^1,\]
which means that $\mathbf s^1\in M(\mathbf v,d)$. The area of the rectangle spanned by $\mathbf r^1$ and $\mathbf s^1$ is $\|\mathbf r^1\|^2\ell/d=d\ell(v_2^2+v_3^2)$. By Proposition~\ref{perp_area}, the area of the fundamental parallelogram in $\mathbf v^\perp$ is $\ell$.
The index of the sublattice generated by $\mathbf r^1$ and $\mathbf s^1$ in $\mathbf v^\perp$ is the quotient of the areas of the fundamental parallelograms, which is equal to $d(v_2^2+v_3^2)$.
This shows that $i$ divides $d(v_2^2+v_3^2)$. We get similarly that $i$ also divides $d(v_1^2+v_3^2)$ and $d(v_1^2+v_2^2)$, which yields that $i$ divides the greatest common divisor
\begin{align*}
\gcd\left(d(v_2^2+v_3^2),d(v_1^2+v_3^2),d(v_1^2+v_2^2)\right)
&=d\gcd\left(v_2^2+v_3^2,v_1^2+v_3^2,v_1^2+v_2^2\right)\\
{}&=d\gcd\left(v_2^2+v_3^2,v_1^2+v_3^2,v_1^2+v_2^2,v_2^2-v_3^2,v_1^2-v_3^2,v_1^2-v_2^2\right)\\
{}&=d\gcd\left(2v_1^2,2v_2^2,2v_3^2,v_2^2+v_3^2,v_1^2+v_3^2,v_1^2+v_2^2\right).
\end{align*}
Using $\gcd(v_1,v_2,v_3)=1$, we obtain $\gcd(d(v_2^2+v_3^2),d(v_1^2+v_3^2),d(v_1^2+v_2^2))=d$ if $v_1,v_2,v_3$ are not all odd. This implies the statement in this case. 
If $v_1,v_2,v_3$ are all odd, then $\gcd(d(v_2^2+v_3^2),d(v_1^2+v_3^2),d(v_1^2+v_2^2))=2d$, so we have that $i$ divides $2d$. In this case, $\ell^2=v_1^2+v_2^2+v_3^2$ is odd, so is $d$. The sublattice $d\mathbf v^\perp$ is contained in $M(\mathbf v,d)$, and its index is $d^2$. Therefore $i$ divides $\gcd(2d,d^2)=d$.
\end{proof}
Lemma~\ref{lem:basic} implies for a vector $\mathbf a\in \Gamma$ that $\mathbf a\times\mathbf v$ is divisible by $d$ and $\mathbf a\times\mathbf v/d\in \Gamma$. Since $\mathbf a\times\mathbf v/d\in\mathbf v^\perp$ as well, we obtain $\mathbf a\times\mathbf v/d\in M(\mathbf v,d)$. This motivates the definition of the set
\begin{align*}
\Gamma(\mathbf v,d)&=\{\mathbf a\in\mathbb Z^3\mid\mathbf a\times\mathbf v/d\in M(\mathbf v,d) \}\\
&=\{\mathbf a\in\mathbb Z^3\mid\mathbf a\times\mathbf v\text{ is divisible by $d$, and }(\mathbf a\times\mathbf v)\times\mathbf v\text{ is divisible by $d^2$}\}.
\end{align*}
We have $\Gamma\subseteq \Gamma(\mathbf v,d)$ and $\mathbf v\in \Gamma(\mathbf v,d)$. As the cross product is linear, $\Gamma(\mathbf v,d)$ is a subgroup. The following lemma shows that $\Gamma(\mathbf v,d)$ is a sublattice.
\begin{lemma}
The index of $\Gamma(\mathbf v,d)$ in $\mathbb Z^3$ is $d^3$.
\end{lemma}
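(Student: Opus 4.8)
The plan is to recognize $\Gamma(\mathbf v,d)$ as the preimage of a concrete sublattice of $\mathbf v^\perp$ under the cross-product map, and then read off the index from the index already computed for $M(\mathbf v,d)$. Concretely, let $\psi\colon\mathbb Z^3\to\mathbf v^\perp$ denote the map $\mathbf a\mapsto\mathbf a\times\mathbf v$. Unwinding the definition, $\mathbf a\in\Gamma(\mathbf v,d)$ means exactly that $\mathbf a\times\mathbf v$ is divisible by $d$ and $(\mathbf a\times\mathbf v)/d\in M(\mathbf v,d)$; equivalently, $\psi(\mathbf a)\in dM(\mathbf v,d)$, where $dM(\mathbf v,d)$ denotes the sublattice of $d$-fold multiples of elements of $M(\mathbf v,d)$. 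Thus $\Gamma(\mathbf v,d)=\psi^{-1}(dM(\mathbf v,d))$.

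The second step is to control $\psi$ as a group homomorphism. By linearity of the cross product $\psi$ is a homomorphism, and its kernel is $\mathbb Z\mathbf v$, since $\mathbf a\times\mathbf v=\mathbf 0$ forces $\mathbf a$ to be parallel to $\mathbf v$ and $\mathbf v$ is primitive. Proposition~\ref{equivalent} says precisely that $\psi$ induces a bijection $\mathbb Z^3/\mathbb Z\mathbf v\to\mathbf v^\perp$; in particular $\psi$ is surjective. For any subgroup $L\le\mathbf v^\perp$ of finite index, the assignment $\mathbf a+\psi^{-1}(L)\mapsto\psi(\mathbf a)+L$ is a well-defined bijection between the cosets, so $[\mathbb Z^3:\psi^{-1}(L)]=[\mathbf v^\perp:L]$. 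Taking $L=dM(\mathbf v,d)$ gives $[\mathbb Z^3:\Gamma(\mathbf v,d)]=[\mathbf v^\perp:dM(\mathbf v,d)]$.

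The final step is to evaluate this index by factoring through $M(\mathbf v,d)$: since $dM(\mathbf v,d)\subseteq M(\mathbf v,d)\subseteq\mathbf v^\perp$, multiplicativity of the index gives
\[[\mathbf v^\perp:dM(\mathbf v,d)]=[\mathbf v^\perp:M(\mathbf v,d)]\cdot[M(\mathbf v,d):dM(\mathbf v,d)].\]
The first factor is $d$ by the preceding lemma. For the second factor, $M(\mathbf v,d)$ is a two-dimensional lattice, so scaling it by $d$ multiplies the fundamental area by $d^2$; hence $[M(\mathbf v,d):dM(\mathbf v,d)]=d^2$, the $n=2$ instance of the general fact that $[\Lambda:k\Lambda]=k^n$. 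Multiplying, $[\mathbf v^\perp:dM(\mathbf v,d)]=d\cdot d^2=d^3$, which is the claim.

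I expect the only real subtlety to lie in the bookkeeping of the first two steps: checking that membership in $\Gamma(\mathbf v,d)$ is literally the condition $\psi(\mathbf a)\in dM(\mathbf v,d)$, so that $\Gamma(\mathbf v,d)$ is genuinely a full preimage rather than merely contained in one, and verifying the index-preservation $[\mathbb Z^3:\psi^{-1}(L)]=[\mathbf v^\perp:L]$, which rests essentially on the surjectivity of $\psi$ supplied by Proposition~\ref{equivalent}. Once these are in place the computation is immediate, so I do not anticipate a genuinely hard step.
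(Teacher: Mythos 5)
Your proposal is correct and follows the same route as the paper: identify $\Gamma(\mathbf v,d)$ as the preimage of $dM(\mathbf v,d)$ under the map $\mathbf a\mapsto\mathbf a\times\mathbf v$, invoke Proposition~\ref{equivalent} to equate $[\mathbb Z^3:\Gamma(\mathbf v,d)]$ with $[\mathbf v^\perp:dM(\mathbf v,d)]$, and compute the latter as $d\cdot d^2=d^3$. The only difference is that you spell out the group-theoretic bookkeeping (kernel, surjectivity, index preservation under preimages, and $[\Lambda:k\Lambda]=k^n$ for $n=2$) that the paper leaves implicit.
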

\begin{proof}
The sublattice $\Gamma(\mathbf v,d)$ contains such vectors $\mathbf a\in\mathbb Z^3$ that $\mathbf a\times\mathbf v\in dM(\mathbf v,d)$. 
 By Proposition~\ref{equivalent}, the index of $\Gamma(\mathbf v,d)$ in $\mathbb Z^3$ is equal to the index of $dM(\mathbf v,d)$ in $\mathbf v^\perp$.
As $M(\mathbf v,d)$ has index $d$ in $\mathbf v^\perp$, the index of the sublattice $dM(\mathbf v,d)$ in $\mathbf v^\perp$ is $d^3$.
\end{proof}
Since the index of $\Gamma$ is $d^3$ as well, if there exists an appropriate cubic sublattice, then it is $\Gamma(\mathbf v,d)$, which proves the uniqueness part of Theorem~\ref{main}.

\vspace{3mm}
Now we prove that $\Gamma(\mathbf v,d)$ is indeed a cubic sublattice. The first step is to show that the dot products of the elements of $\Gamma(\mathbf v,d)$ are divisible by $d^2$, but we have to start with weaker lemmas.
\begin{lemma}\label{p1}
For a vector $\mathbf a\in \Gamma(\mathbf v,d)$, the dot product $\mathbf a\cdot\mathbf v$ is an integer and it is divisible by $d^2$.
\end{lemma}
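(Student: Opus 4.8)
The plan is to read off the divisibility $d^2\mid\mathbf a\cdot\mathbf v$ directly from the second defining condition of $\Gamma(\mathbf v,d)$, namely that $(\mathbf a\times\mathbf v)\times\mathbf v$ is divisible by $d^2$, by expanding this double cross product with the vector triple product identity. Writing $\ell=\|\mathbf v\|$, I would use
\[(\mathbf a\times\mathbf v)\times\mathbf v=(\mathbf a\cdot\mathbf v)\mathbf v-\ell^2\mathbf a,\]
exactly as the earlier index computation did for $\mathbf u\times\mathbf v$. Thus membership of $\mathbf a$ in $\Gamma(\mathbf v,d)$ tells us that $d^2$ divides the integer vector $(\mathbf a\cdot\mathbf v)\mathbf v-\ell^2\mathbf a$.

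Next I would bring in the standing hypothesis $d^2\mid\|\mathbf v\|^2=\ell^2$, which is available because we have reduced to the primitive case. Since $d^2\mid\ell^2$, the vector $\ell^2\mathbf a$ is divisible by $d^2$ coordinatewise, so subtracting it preserves divisibility: $d^2$ divides $(\mathbf a\cdot\mathbf v)\mathbf v$. Setting $p=\mathbf a\cdot\mathbf v$, this says $d^2\mid p\,v_i$ for each coordinate $i=1,2,3$.

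Finally I would invoke the primitiveness of $\mathbf v$. Since $d^2$ divides each of $pv_1,pv_2,pv_3$, it divides their greatest common divisor $|p|\gcd(v_1,v_2,v_3)=|p|$, where $\gcd(v_1,v_2,v_3)=1$. Hence $d^2\mid p=\mathbf a\cdot\mathbf v$, as claimed; that $\mathbf a\cdot\mathbf v$ is an integer is immediate from $\mathbf a,\mathbf v\in\mathbb Z^3$. I do not expect any real obstacle here: the only point to watch is the degenerate case $p=0$, where the conclusion is trivial. The crux is simply recognising that the triple product identity converts the geometric divisibility condition defining $\Gamma(\mathbf v,d)$ into a number-theoretic one, which primitiveness together with $d^2\mid\ell^2$ resolves at once.
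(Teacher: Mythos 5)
Your proof is correct and follows essentially the same route as the paper: expand $(\mathbf a\times\mathbf v)\times\mathbf v=(\mathbf a\cdot\mathbf v)\mathbf v-(\mathbf v\cdot\mathbf v)\mathbf a$, use $d^2\mid\|\mathbf v\|^2$ to isolate $(\mathbf a\cdot\mathbf v)\mathbf v$, and conclude from the primitiveness of $\mathbf v$. Your explicit gcd step $d^2\mid pv_i\Rightarrow d^2\mid p$ just spells out what the paper compresses into ``this gives the statement as $\mathbf v$ is primitive.''
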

\begin{proof}
For $\mathbf a\in \Gamma(\mathbf v,d)$, we have that $d^2$ divides
\[(\mathbf a\times\mathbf v)\times\mathbf v=(\mathbf a\cdot\mathbf v)\mathbf v-(\mathbf v\cdot\mathbf v)\mathbf a,\]
which means that
\[\frac{\mathbf a\cdot\mathbf v}{d^2}\mathbf v-\frac{\mathbf v\cdot\mathbf v}{d^2}\mathbf a\in\mathbb Z^3.\]
Since $\mathbf v\cdot\mathbf v$ is divisible by $d^2$, we earn $\dfrac{\mathbf a\cdot\mathbf v}{d^2}\mathbf v\in\mathbb Z^3$. This gives the statement as $\mathbf v$ is primitive.
\end{proof}

\begin{lemma}\label{p2}
For vectors $\mathbf a,\mathbf b\in \Gamma(\mathbf v,d)$, the vector $\dfrac{\mathbf a\times\mathbf b}{d}$ is contained in $\Gamma(\mathbf v,d)$.
\end{lemma}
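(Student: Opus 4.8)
The goal is to show that $\mathbf w=(\mathbf a\times\mathbf b)/d$ lies in $\Gamma(\mathbf v,d)$, which amounts to three claims: first that $\mathbf w\in\mathbb Z^3$, i.e.\ that $\mathbf a\times\mathbf b$ is divisible by $d$; and then the two defining conditions, namely that $\mathbf w\times\mathbf v$ is divisible by $d$ and that $(\mathbf w\times\mathbf v)\times\mathbf v$ is divisible by $d^2$. The plan is to settle the integrality claim first, since I expect it to be the main obstacle, and then to read off the remaining two conditions almost for free from Lemma~\ref{p1} together with the vector triple product expansions.

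For the integrality the key point is that membership in $\Gamma(\mathbf v,d)$ already forces $\mathbf a\times\mathbf v$ and $\mathbf b\times\mathbf v$ to be divisible by $d$, and that $\mathbf v$ is primitive. Since $\mathbf v$ is primitive, I can fix an integer vector $\mathbf t$ with $\mathbf t\cdot\mathbf v=1$, just as before. The identity $\mathbf t\times(\mathbf a\times\mathbf v)=(\mathbf t\cdot\mathbf v)\mathbf a-(\mathbf t\cdot\mathbf a)\mathbf v=\mathbf a-(\mathbf t\cdot\mathbf a)\mathbf v$ then shows that $\mathbf a\equiv(\mathbf t\cdot\mathbf a)\mathbf v\pmod d$, because the left-hand side is $\mathbf t\times(\mathbf a\times\mathbf v)$ with $\mathbf a\times\mathbf v$ divisible by $d$. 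The same reasoning gives $\mathbf b\equiv(\mathbf t\cdot\mathbf b)\mathbf v\pmod d$. Expanding $\mathbf a\times\mathbf b$ by bilinearity, the $\mathbf v\times\mathbf v$ term vanishes and every remaining term carries a factor of $d$, so $\mathbf a\times\mathbf b$ is divisible by $d$ and $\mathbf w\in\mathbb Z^3$.

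It then remains to verify the two divisibility conditions for $\mathbf w$. Writing $(\mathbf a\times\mathbf b)\times\mathbf v=(\mathbf a\cdot\mathbf v)\mathbf b-(\mathbf b\cdot\mathbf v)\mathbf a$ and recalling from Lemma~\ref{p1} that both $\mathbf a\cdot\mathbf v$ and $\mathbf b\cdot\mathbf v$ are divisible by $d^2$, I obtain that $(\mathbf a\times\mathbf b)\times\mathbf v$ is divisible by $d^2$; dividing by $d$ shows that $\mathbf w\times\mathbf v$ is divisible by $d$, the first condition. For the second, I apply the cross product with $\mathbf v$ once more and get $((\mathbf a\times\mathbf b)\times\mathbf v)\times\mathbf v=(\mathbf a\cdot\mathbf v)(\mathbf b\times\mathbf v)-(\mathbf b\cdot\mathbf v)(\mathbf a\times\mathbf v)$. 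Here each product pairs a dot product divisible by $d^2$ with a cross product divisible by $d$, so the whole expression is divisible by $d^3$; dividing by $d$ shows that $(\mathbf w\times\mathbf v)\times\mathbf v$ is divisible by $d^2$, the second condition. Hence $\mathbf w\in\Gamma(\mathbf v,d)$. The only genuinely delicate point is the integrality step, where it is essential that $\mathbf v$ be primitive so that the auxiliary vector $\mathbf t$ exists; the rest is bookkeeping with the standard triple product identity.
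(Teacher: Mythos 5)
Your proof is correct, and for the two defining divisibility conditions it is the same computation as the paper's: the paper shows that $\bigl(\tfrac{\mathbf a\times\mathbf b}{d}\times\mathbf v\bigr)/d=\tfrac{\mathbf a\cdot\mathbf v}{d^2}\mathbf b-\tfrac{\mathbf b\cdot\mathbf v}{d^2}\mathbf a$ lies in $M(\mathbf v,d)$ by citing Lemma~\ref{p1} for the coefficients and the divisibility of $\mathbf a\times\mathbf v$, $\mathbf b\times\mathbf v$ by $d$, which is exactly your verification that $\mathbf w\times\mathbf v$ is divisible by $d$ and $(\mathbf w\times\mathbf v)\times\mathbf v$ by $d^2$. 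Where you go beyond the paper is the integrality step: the paper's proof never checks that $\mathbf a\times\mathbf b$ is divisible by $d$, i.e.\ that $\mathbf w=(\mathbf a\times\mathbf b)/d$ is a lattice vector at all, although membership in $\Gamma(\mathbf v,d)\subseteq\mathbb Z^3$ requires it. This is a genuine omission, not a formality: the two divisibility conditions alone do not imply $d\mid\mathbf c$ for an integer vector $\mathbf c$ (the vector $\mathbf c=\mathbf v$ satisfies both, since $\mathbf v\times\mathbf v=\mathbf 0$, yet is primitive), so the integrality must be extracted from the specific form $\mathbf c=\mathbf a\times\mathbf b$ with $\mathbf a,\mathbf b\in\Gamma(\mathbf v,d)$, which is what your argument does: choosing $\mathbf t$ with $\mathbf t\cdot\mathbf v=1$ (the same device the paper uses in the proof that $M(\mathbf v,d)$ has index $d$), the identity $\mathbf t\times(\mathbf a\times\mathbf v)=\mathbf a-(\mathbf t\cdot\mathbf a)\mathbf v$ gives $\mathbf a\equiv(\mathbf t\cdot\mathbf a)\mathbf v$ and $\mathbf b\equiv(\mathbf t\cdot\mathbf b)\mathbf v \pmod d$, and bilinearity of the cross product then kills the only term without a factor of $d$. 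This extra step is not pedantry; it is precisely the part of Lemma~\ref{p2} that gets used later, both in the proof of Lemma~\ref{p3} (where $(\mathbf a\times\mathbf v)\times\mathbf b$ divisible by $d^2$ is deduced by applying Lemma~\ref{p2} twice) and in forming the third cubic basis vector $\mathbf c=(\mathbf a\times\mathbf b)/d$ at the end of Section~\ref{proof}. So your version is the more complete one, and the one the rest of the paper actually relies on.
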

\begin{proof}
Our goal is to prove that $M(\mathbf v,d)$ contains the vector
\[\frac{\frac{\mathbf a\times\mathbf b}{d}\times\mathbf v}{d}=\frac{\mathbf a\cdot\mathbf v}{d^2}\mathbf b-\frac{\mathbf b\cdot\mathbf v}{d^2}\mathbf a,\]
where the coefficients $\dfrac{\mathbf a\cdot\mathbf v}{d^2}$ and $\dfrac{\mathbf b\cdot\mathbf v}{d^2}$ are integers by Lemma~\ref{p1}.
The above vector is in $M(\mathbf v,d)$ if
\[\frac{\mathbf a\cdot\mathbf v}{d^2}\frac{\mathbf b\times\mathbf v}{d}-\frac{\mathbf b\cdot\mathbf v}{d^2}\frac{\mathbf a\times\mathbf v}d\in\mathbb Z^3,\]
which is satisfied, because $\dfrac{\mathbf b\times\mathbf v}{d},\dfrac{\mathbf a\times\mathbf v}d\in\mathbb Z^3$ as $\mathbf a,\mathbf b\in \Gamma(\mathbf v,d)$.
\end{proof}

\begin{lemma}\label{p3}
For vectors $\mathbf a,\mathbf b\in \Gamma(\mathbf v,d)$, the dot product $\mathbf a\cdot\mathbf b$ is divisible by $d^2$.
\end{lemma}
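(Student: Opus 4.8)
The plan is to combine Lemmas~\ref{p1} and~\ref{p2} through the expansion of a double cross product (the \emph{bac--cab} rule). The starting point is the identity
\[\mathbf a\times(\mathbf b\times\mathbf v)=(\mathbf a\cdot\mathbf v)\,\mathbf b-(\mathbf a\cdot\mathbf b)\,\mathbf v,\]
which I would rearrange to isolate the quantity of interest:
\[(\mathbf a\cdot\mathbf b)\,\mathbf v=(\mathbf a\cdot\mathbf v)\,\mathbf b-\mathbf a\times(\mathbf b\times\mathbf v).\]
If both terms on the right-hand side turn out to be divisible by $d^2$, then $(\mathbf a\cdot\mathbf b)\,\mathbf v$ is divisible by $d^2$, and since $\mathbf v$ is primitive this forces $d^2\mid\mathbf a\cdot\mathbf b$, which is exactly the claim.

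The first term is immediate: by Lemma~\ref{p1} the scalar $\mathbf a\cdot\mathbf v$ is divisible by $d^2$, hence so is $(\mathbf a\cdot\mathbf v)\,\mathbf b$. The real work is to show that $\mathbf a\times(\mathbf b\times\mathbf v)$ is divisible by $d^2$. Since $\mathbf b\in\Gamma(\mathbf v,d)$, the vector $\mathbf w:=(\mathbf b\times\mathbf v)/d$ is an integer vector, so $\mathbf a\times(\mathbf b\times\mathbf v)=d\,(\mathbf a\times\mathbf w)$ is automatically divisible by $d$; to gain the second factor of $d$ I would invoke Lemma~\ref{p2} for the pair $\mathbf a,\mathbf w$, which gives $(\mathbf a\times\mathbf w)/d\in\Gamma(\mathbf v,d)\subseteq\mathbb Z^3$, that is, $\mathbf a\times\mathbf w$ is itself divisible by $d$, so that $\mathbf a\times(\mathbf b\times\mathbf v)$ is divisible by $d^2$ as needed.

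The one point that needs care — and which I expect to be the main obstacle — is that Lemma~\ref{p2} may only be applied once we know $\mathbf w\in\Gamma(\mathbf v,d)$. This is where the hypothesis $d^2\mid\|\mathbf v\|^2$ re-enters. By the definition of $\Gamma(\mathbf v,d)$ the vector $\mathbf w=(\mathbf b\times\mathbf v)/d$ lies in $M(\mathbf v,d)$, so it suffices to check the inclusion $M(\mathbf v,d)\subseteq\Gamma(\mathbf v,d)$. For $\mathbf w\in M(\mathbf v,d)\subseteq\mathbf v^\perp$ the vector $\mathbf w\times\mathbf v$ is divisible by $d$, and the perpendicularity $\mathbf w\cdot\mathbf v=0$ gives
\[(\mathbf w\times\mathbf v)\times\mathbf v=(\mathbf w\cdot\mathbf v)\,\mathbf v-(\mathbf v\cdot\mathbf v)\,\mathbf w=-\|\mathbf v\|^2\,\mathbf w,\]
which is divisible by $d^2$ precisely because $d^2\mid\|\mathbf v\|^2$; hence $\mathbf w\in\Gamma(\mathbf v,d)$. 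With this inclusion established the chain above closes up, and the whole argument reduces to the two short divisibility checks together with the primitivity of $\mathbf v$.
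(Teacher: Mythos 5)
Your proof is correct, and its skeleton is the same as the paper's: produce a double cross product of $\mathbf a$, $\mathbf b$, $\mathbf v$ that is divisible by $d^2$ via Lemma~\ref{p2}, expand it by the triple-product identity, discard the unwanted term using Lemma~\ref{p1}, and conclude from the primitivity of $\mathbf v$. The one genuine difference is how you certify that $\mathbf w=(\mathbf b\times\mathbf v)/d$ lies in $\Gamma(\mathbf v,d)$ so that Lemma~\ref{p2} applies to the pair $(\mathbf a,\mathbf w)$: you prove the inclusion $M(\mathbf v,d)\subseteq\Gamma(\mathbf v,d)$, invoking the hypothesis $d^2\mid\|\mathbf v\|^2$ a second time. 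This is correct, but it is a self-imposed detour: since $\mathbf v\in\Gamma(\mathbf v,d)$ (noted in the paper right after the definition of $\Gamma(\mathbf v,d)$), Lemma~\ref{p2} applied to the pair $(\mathbf b,\mathbf v)$ already yields $\mathbf w\in\Gamma(\mathbf v,d)$ in one line; this is exactly the paper's route, which expands $(\mathbf a\times\mathbf v)\times\mathbf b$ rather than $\mathbf a\times(\mathbf b\times\mathbf v)$ --- the same computation with the roles of $\mathbf a$ and $\mathbf b$ exchanged. Your inclusion $M(\mathbf v,d)\subseteq\Gamma(\mathbf v,d)$ is still a worthwhile observation, since it is in effect the nontrivial half of the paper's earlier remark that $M(\mathbf v,d)$ coincides with $\mathbf v^\perp\cap\Gamma$, but for this particular lemma it buys nothing that the membership $\mathbf v\in\Gamma(\mathbf v,d)$ does not already provide.
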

\begin{proof}
We have $\dfrac{\mathbf a\times\mathbf v}{d}\in\Gamma(\mathbf v,d)$ by Lemma~\ref{p2}. Applying Lemma~\ref{p2} again, we get that the vector 
\[(\mathbf a\times\mathbf v)\times\mathbf b=(\mathbf a\cdot\mathbf b)\mathbf v-(\mathbf v\cdot\mathbf b)\mathbf a\]
is divisible by $d^2$. Lemma~\ref{p1} implies that $d^2$ divides the coefficient $\mathbf v\cdot\mathbf b$. Therefore the vector $(\mathbf a\cdot\mathbf b)\mathbf v$ is also divisible by $d^2$, which gives the statement.
\end{proof}
\pagebreak
The second step is to find the cubic basis in $\Gamma(\mathbf v,d)$.
\begin{lemma}\label{p11}
There exists a vector $\mathbf a\in \Gamma(\mathbf v,d)$ of length $d$.
\end{lemma}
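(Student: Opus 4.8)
The plan is to exploit Lemma~\ref{p3}: for every $\mathbf a\in\Gamma(\mathbf v,d)$ the squared length $\|\mathbf a\|^2=\mathbf a\cdot\mathbf a$ is divisible by $d^2$. Hence any nonzero vector of $\Gamma(\mathbf v,d)$ whose squared length is strictly below $2d^2$ is automatically of squared length exactly $d^2$, i.e.\ of length $d$. It therefore suffices to find a single nonzero lattice vector that is short in this quantitative sense, and for this I would invoke Minkowski's convex body theorem.

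First I would record that the fundamental parallelepiped of $\Gamma(\mathbf v,d)$ has volume $d^3$, since $\Gamma(\mathbf v,d)$ has index $d^3$ in $\mathbb Z^3$ by the previous lemma. Then I would apply Minkowski's theorem to the closed ball $B_r$ of radius $r$ about the origin, a symmetric convex body of volume $\frac43\pi r^3$. Taking $r=d(6/\pi)^{1/3}$ makes this volume equal to $8d^3=2^3\cdot d^3$, so the ball meets the Minkowski threshold and, being compact, must contain a nonzero vector $\mathbf a\in\Gamma(\mathbf v,d)$.

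It remains to check that $\mathbf a$ is short enough. By construction $\|\mathbf a\|^2\le r^2=d^2(6/\pi)^{2/3}$, and since $(6/\pi)^{2/3}\approx1.54<2$ we obtain $0<\|\mathbf a\|^2<2d^2$. Together with the divisibility $d^2\mid\|\mathbf a\|^2$ coming from Lemma~\ref{p3}, this forces $\|\mathbf a\|^2=d^2$, producing the required vector of length $d$.

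The whole argument turns on a single numerical inequality, which is the only real obstacle: the short vector must have squared length genuinely less than $2d^2$. Minkowski's estimate gives the factor $(6/\pi)^{2/3}\approx1.54$, and the sharper bound coming from Hermite's constant in dimension three, $\gamma_3=2^{1/3}\approx1.26$, gives an even smaller factor; both sit comfortably below $2$, which is precisely what makes dimension three work. I would use whichever of the two estimates is already available in the cited lattice-geometry background.
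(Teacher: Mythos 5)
Your proof is correct and takes essentially the same approach as the paper: both use Lemma~\ref{p3} to reduce the problem to finding a nonzero vector of squared length strictly less than $2d^2$, and both get such a vector from a volume bound on balls relative to the covolume $d^3$. The only difference is presentational --- the paper derives the short-vector bound by an inline sphere-packing contradiction (disjoint balls of radius $\sqrt2 d/2$ would have density exceeding $1$), while you cite Minkowski's convex body theorem; the two come down to the very same numerical inequality.
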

\begin{proof}
Suppose the contrary. Lemma~\ref{p3} implies that the squared length of every vector of $\Gamma(\mathbf v,d)$ is divisible by $d^2$. This and the indirect assumption yield that the length of every non-zero vector of $\Gamma(\mathbf v,d)$ is at least $\sqrt2d$. Thus, the balls centered at the elements of $\Gamma(\mathbf v,d)$ with radius $\sqrt2d/2$ are disjoint. The parts of the intersection of a fundamental parallelepiped and these balls form an entire ball. Its volume is
\[\frac{4\pi}3\left(\frac{\sqrt2d}{2}\right)^3>\sqrt2d^3,\]
while the volume of the fundamental parallelepiped is $d^3$. This is a contradiction.
\end{proof}
Fix a  vector $\mathbf a\in \Gamma(\mathbf v,d)$ of length $d$. By Lemma~\ref{p3}, for a vector $\mathbf b\in \Gamma(\mathbf v,d)$, the dot product $\mathbf a\cdot\mathbf b$ is divisible by $d^2$, so the length of the projection of $\mathbf b$ to $\mathbf a$ is a multiple of $d$. Therefore the elements of $\Gamma(\mathbf v,d)$ lie in $\mathbf a^\perp$ and its translates by the multiples of $\mathbf a$. Consider the sublattice $\Lambda=\mathbf a^\perp\cap \Gamma(\mathbf v,d)$. The area of the fundamental parallelogram in $\Lambda$ is $d^2$ as we can get the basis of the fundamental parallelepiped of $\Gamma(\mathbf v,d)$ by adding $\mathbf a$ to the basis of the fundamental parallelogram of $\Lambda$.
\begin{lemma}\label{p12}
There exists a vector $\mathbf b\in \Lambda$ of length $d$.
\end{lemma}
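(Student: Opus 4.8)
The plan is to reuse the sphere-packing argument of Lemma~\ref{p11}, but in dimension two, applied to the lattice $\Lambda$ whose fundamental parallelogram has area $d^2$. First I would record the consequence of Lemma~\ref{p3}: since $\Lambda\subseteq\Gamma(\mathbf v,d)$, the squared length of every non-zero vector of $\Lambda$ is a positive multiple of $d^2$, so every such vector has length at least $d$, with length exactly $d$ precisely when its squared length equals $d^2$. Thus the whole task reduces to ruling out the scenario in which every non-zero vector of $\Lambda$ has squared length at least $2d^2$.

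To do this I would argue by contradiction. Suppose no vector of $\Lambda$ has length $d$; then every non-zero vector has length at least $\sqrt2\,d$, so the open disks of radius $\tfrac{\sqrt2}{2}d$ centred at the points of $\Lambda$ are pairwise disjoint. Exactly as in Lemma~\ref{p11}, the parts of these disks lying inside one fundamental parallelogram reassemble, through the lattice translations, into a single whole disk of area $\tfrac{\pi}{2}d^2$. Since $\tfrac{\pi}{2}>1$, this area exceeds the area $d^2$ of the fundamental parallelogram, a contradiction; hence some $\mathbf b\in\Lambda$ has squared length exactly $d^2$, that is, length $d$.

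I do not expect a genuine obstacle here: the numerical comparison leaves generous room ($\tfrac{\pi}{2}\approx1.57$), and the only thing to justify carefully is the reassembly step, which is identical to the one already used in Lemma~\ref{p11}. If one preferred to avoid the packing picture, the same bound follows from an elementary reduced-basis estimate: take a shortest vector $\mathbf b$ together with a second basis vector whose orthogonal projection onto $\mathbf b$ has length at most $\tfrac12\|\mathbf b\|$; its height above the line of $\mathbf b$ is then at least $\tfrac{\sqrt3}{2}\|\mathbf b\|$, so $d^2\ge\tfrac{\sqrt3}{2}\|\mathbf b\|^2$, giving $\|\mathbf b\|^2\le\tfrac{2}{\sqrt3}d^2<2d^2$, and divisibility by $d^2$ again forces $\|\mathbf b\|^2=d^2$. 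Either way, once Lemma~\ref{p12} is available the theorem is essentially finished: the vectors $\mathbf a$, $\mathbf b$, and $\mathbf a\times\mathbf b/d$ — the last lying in $\Gamma(\mathbf v,d)$ by Lemma~\ref{p2} and having length $\|\mathbf a\|\,\|\mathbf b\|/d=d$ since $\mathbf a\perp\mathbf b$ — are pairwise orthogonal with common length $d$, hence form a cubic basis of $\Gamma(\mathbf v,d)$.
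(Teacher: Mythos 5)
Your proof is correct and takes essentially the same approach as the paper: the identical disk-packing contradiction, with disjoint disks of radius $\tfrac{\sqrt2}{2}d$ whose reassembled area $\tfrac{\pi}{2}d^2$ exceeds the area $d^2$ of the fundamental parallelogram of $\Lambda$. The alternative reduced-basis estimate you sketch ($\|\mathbf b\|^2\le\tfrac{2}{\sqrt3}d^2<2d^2$, then divisibility forces $\|\mathbf b\|^2=d^2$) is also valid and would bypass the packing picture, but your main argument coincides with the paper's.
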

\begin{proof}
We prove by contradiction in a way similar to the proof of Lemma~\ref{p11}. We suppose that the length of every non-zero vector in $\Lambda$ is at least $\sqrt2d$. In this case, the disks around the elements of $\Lambda$ with radius $\sqrt2d/2$ in the plane perpendicular to $\mathbf a$ are disjoint. The area of the intersection of a fundamental parallelogram and these disks is
\[\left(\frac{\sqrt2d}{2}\right)^2\pi=\frac\pi2d^2>d^2,\]
which is the area of the fundamental parallelogram, this is a contradiction.
\end{proof}
Fix a vector $\mathbf b\in \Lambda$ of length $d$.
The vector $\mathbf c=\frac{\mathbf a\times\mathbf b}d$ is in $\Gamma(\mathbf v,d)$ by Lemma~\ref{p2}. Since the lengths of $\mathbf a$ and $\mathbf b$ are $d$ and they are perpendicular, the length of $\mathbf c$ is also $d$. The vectors $\mathbf a,\mathbf b,\mathbf c$ are pairwise perpendicular, so the volume of the parallelepiped generated by them is $d^3$. As the index of $\Gamma(\mathbf v,d)$ in $\mathbb Z^3$ is also $d^3$, the vectors $\mathbf a,\mathbf b,\mathbf c$ generate the sublattice $\Gamma(\mathbf v,d)$. This means that $\Gamma(\mathbf v,d)$ is indeed a cubic sublattice with edge length $d$.

\section{Characterization of cubic sublattices}\label{characterization}
Although the following two lemmas are well-known, we prove them for the sake of completeness.
\begin{lemma}\label{gcd2}
Let $K$ be a sublattice of a two-dimensional lattice $\Lambda$ and let $k$ be the greatest common divisor of the vectors of $K$. Then there exists a vector in $K$ whose greatest divisor is $k$.
\end{lemma}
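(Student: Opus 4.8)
The plan is to produce an explicit vector by a minimality argument combined with the extension-to-a-basis property recorded in Section~\ref{preliminaries}. Among all nonzero vectors of $K$, I would choose a vector $\mathbf w$ whose greatest divisor $m$ is as small as possible; such a minimum exists because greatest divisors are positive integers. Writing $\mathbf w=m\mathbf u$ with $\mathbf u$ primitive in $\Lambda$, the goal is to show that this $m$ already equals $k$, so that $\mathbf w$ is the desired vector. Note that $k$ divides the greatest divisor of every vector of $K$, since $K\subseteq k\Lambda$; in particular $k\mid m$, and it therefore suffices to prove the reverse, namely that every vector of $K$ is divisible by $m$.

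Since $\mathbf u$ is primitive, it is a basis of $\Lambda\cap\mathbb R\mathbf u$ and hence extends to a basis $\{\mathbf u,\mathbf u'\}$ of $\Lambda$. Relative to this basis I would diagonalize $K$ in two steps. First, the $\mathbf u$-multiples contained in $K$ form a subgroup of $\mathbb Z\mathbf u$ generated by some $m'\mathbf u$ with $m'\mid m$; but $m'\mathbf u\in K$ has greatest divisor $m'\ge m$ by minimality, so $m'=m$ and $K\cap\mathbb Z\mathbf u=\mathbb Z\mathbf w$. Second, choose $\mathbf v=p\mathbf u+c\mathbf u'\in K$ whose $\mathbf u'$-coefficient $c$ is the least positive one occurring in $K$ (it exists because $K$ has rank two), and subtract a suitable multiple of $\mathbf w$ to arrange $0\le p<m$. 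Since the $\mathbf u'$-coefficients of $K$ are exactly the multiples of $c$, a routine check shows $\{\mathbf w,\mathbf v\}$ is a basis of $K$.

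The crux is then to use minimality a second time. The greatest divisor of $\mathbf v$ is $\gcd(p,c)\ge m$; as $\gcd(p,c)$ divides $p$ and $0\le p<m$, this forces $p=0$, so $\mathbf v=c\mathbf u'$. Finally the greatest divisor of $\mathbf w+\mathbf v=m\mathbf u+c\mathbf u'$ equals $\gcd(m,c)\ge m$, which forces $m\mid c$. Consequently every element $a\,m\mathbf u+b\,c\mathbf u'$ of $K$ is divisible by $m$, whence $m\mid k$; combined with $k\mid m$ this gives $k=m$, so $\mathbf w$ is a vector of $K$ whose greatest divisor is $k$.

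The only genuinely delicate ingredient is this repeated appeal to the minimality of $m$: it is what both pins down $K\cap\mathbb Z\mathbf u=\mathbb Z\mathbf w$ and forces the diagonal entry $c$ to be a multiple of $m$. Everything else—extending $\mathbf u$ to a basis, reducing $p$ modulo $m$, and recognizing $\{\mathbf w,\mathbf v\}$ as a basis of $K$—is routine two-dimensional lattice bookkeeping.
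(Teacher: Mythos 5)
Your proof is correct, but it follows a genuinely different route from the paper. The paper argues directly on a pair of generators: writing $\mathbf v_1=k_1\mathbf u_1$, $\mathbf v_2=k_2\mathbf u_2$ with $\mathbf u_1,\mathbf u_2$ primitive and reducing to $\gcd(k_1,k_2)=1$, it exhibits an explicit primitive vector of the form $c\mathbf v_1+\mathbf v_2$, where $c$ is a carefully chosen product of primes (those dividing the second coordinate of $\mathbf u_2$ in a basis extending $\mathbf u_1$, but not dividing $k_1k_2$), and verifies primitivity prime by prime. You instead run an extremal argument: take $\mathbf w=m\mathbf u$ of minimal greatest divisor $m$, build the triangular basis $\{\mathbf w,\mathbf v\}$ of $K$ adapted to the basis $\{\mathbf u,\mathbf u'\}$ of $\Lambda$, and use minimality twice to force $\mathbf v=c\mathbf u'$ with $m\mid c$, so that $m$ divides all of $K$ and hence $m=k$. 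Each of your steps checks out: $K\cap\mathbb Z\mathbf u=\mathbb Z\mathbf w$ follows since the generator $m'\mathbf u$ satisfies both $m'\mid m$ and $m'\ge m$; the reduction $0\le p<m$ together with $\gcd(p,c)\ge m$ does force $p=0$; and $\gcd(m,c)\ge m$ with $\gcd(m,c)\mid m$ gives $m\mid c$. What the two approaches buy is different: the paper's construction is explicit in terms of given generators (one can write down the primitive vector immediately), whereas your argument is nonconstructive in that flavor but proves more — it shows the minimal greatest divisor over nonzero vectors of $K$ equals $k$, and en route establishes that $K$ admits a ``diagonal'' basis $\{m\mathbf u,\,c\mathbf u'\}$ with $m\mid c$, i.e.\ essentially the elementary divisor (Smith normal form) structure in rank two. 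Your method also generalizes to higher dimensions by the same scheme, which is the alternative the paper alludes to when it remarks that the $n$-dimensional statement follows by induction.
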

\begin{proof}
Let the generators of $K$ be $\mathbf v_1=k_1\mathbf u_1$ and $\mathbf v_2=k_2\mathbf u_2$, where $\mathbf u_1,\mathbf u_2$ are primitive, and $k_1,k_2$ are positive integers. We can assume that $k=\gcd(k_1,k_2)$ is equal to $1$. Unfortunately, the vector $\mathbf v_1+\mathbf v_2$ is not necessarily primitive, see Figure \ref{fig:2dim}, so we have to be more tricky.
\begin{figure}[h]
\centering
\includegraphics[width=10cm]{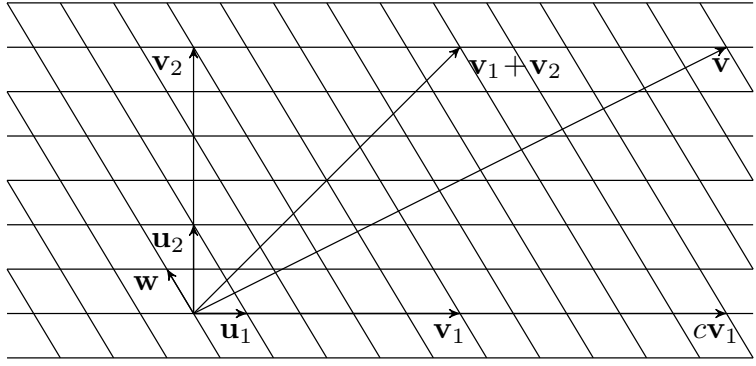}
\caption{The vector $\mathbf v_1+\mathbf v_2$ is divisible by $2$.}
\label{fig:2dim}
\end{figure}
As $\mathbf u_1$ is primitive, there exists such a vector $\mathbf w$ that $\mathbf u_1$ and $\mathbf w$ generate $\Lambda$. We can write $\mathbf u_2=a\mathbf u_1+b\mathbf w$, where $a,b$ are coprime integers. Let $c$ be the product of those prime numbers which divide $b$, but do not divide $k_1k_2$ (set $c=1$ if there is not any such prime number). We show that the vector $\mathbf v=c\mathbf v_1+\mathbf v_2\in K$ is a primitive vector. Suppose the contrary, i.e., there exists a prime number $p$ which divides 
\[\mathbf v=c\mathbf v_1+\mathbf v_2=ck_1\mathbf u_1+k_2(a\mathbf u_1+b\mathbf w)=(ck_1+ak_2)\mathbf u_1+bk_2\mathbf w.\]
Since $\mathbf u_1,\mathbf w$ are the generators of $\Lambda$, $p$ divides $ck_1+ak_2$ and $bk_2$. As $p$ is prime, $p$ divides $b$ or $k_2$. If $p$ divides $k_2$, then $p$ also divides $ck_1$, which contradicts the definition of $c$ or $\gcd(k_1,k_2)=1$. If $p$ does not divide $k_2$, then $p$ divides $b$. By the definition of $c$, $p$ divides $ck_1$, hence $p$ divides $ak_2$ and consequently also $a$. Thus, $p$ divides $\mathbf u_2=a\mathbf u_1+b\mathbf w$, which is a contradiction.
\end{proof}
The next lemma is the three-dimensional version of Lemma~\ref{gcd2}.
\begin{lemma}\label{gcd3}
If the greatest common divisor of the vectors of a sublattice $K$ of a three-dimensional lattice $\Lambda$ is $k$, then $K$ contains a vector whose greatest divisor is $k$.
\end{lemma}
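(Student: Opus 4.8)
The plan is to reduce the three-dimensional statement to the two-dimensional Lemma~\ref{gcd2} by projecting onto a carefully chosen sublattice. After dividing out by $k$, I may assume $k=\gcd$ of the vectors of $K$ equals $1$, and the goal becomes finding a primitive vector inside $K$. First I would pick a primitive vector $\mathbf u_1 \in K$ (for instance, any nonzero vector of $K$ divided by its greatest divisor), and extend it to a basis $\{\mathbf u_1, \mathbf w_2, \mathbf w_3\}$ of $\Lambda$. Writing every vector of $K$ in this basis, I would project $K$ onto the last two coordinates, obtaining a sublattice (or possibly a lower-dimensional subgroup) $\bar K$ of the two-dimensional lattice $\Lambda / \langle \mathbf u_1\rangle \cong \mathbb Z^2$.

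The key step is to control the greatest common divisor of the projected lattice $\bar K$. Let $\bar k$ denote the gcd of the vectors of $\bar K$. Applying Lemma~\ref{gcd2} to $\bar K$ inside this $\mathbb Z^2$ yields a vector $\bar{\mathbf b} \in \bar K$ whose greatest divisor equals $\bar k$; that is, $\bar{\mathbf b} = \bar k\, \bar{\mathbf u}$ for a primitive $\bar{\mathbf u}$. Lift $\bar{\mathbf b}$ to a vector $\mathbf b \in K$. The plan is then to adjust $\mathbf b$ by a suitable multiple of $\mathbf u_1$: since the first coordinate of $\mathbf b$ can be shifted freely by integers (because $\mathbf u_1 \in K$ and $\mathbf u_1$ has first coordinate $1$ in this basis), I can arrange the first coordinate of $\mathbf b/\bar k$ to be any residue I like modulo $\bar k$, in particular coprime to $\bar k$. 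This would make $\mathbf b$ primitive in $\Lambda$, completing the proof, provided I first check that $\bar k$ itself causes no obstruction.

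The main obstacle, and the place requiring the most care, is verifying that $\bar k = 1$, or more precisely handling the interaction between $\bar k$ and the first coordinate. Since the full gcd of $K$ is $1$, no single prime $p$ divides all of $K$; but $p$ could divide all the projected vectors $\bar K$ (forcing $p \mid \bar k$) while being defeated only by some vector of $K$ whose nonzero first coordinate is coprime to $p$. The argument must show that whenever a prime $p$ divides $\bar k$, there is a vector in $K$ with first coordinate not divisible by $p$, and then use $\mathbf u_1$ to transfer that coprimality into the lifted vector $\mathbf b$. I would formalize this by choosing the shift $\mathbf b \mapsto \mathbf b + t\,\mathbf u_1$ so that the greatest divisor of $\mathbf b + t\,\mathbf u_1$ becomes $1$: for each prime $p \mid \bar k$, the coprimality of the first coordinate kills $p$, and for primes $p \nmid \bar k$ primitivity of $\bar{\mathbf u}$ already guarantees $p$ does not divide all later coordinates. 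A simultaneous choice of $t$ handling all relevant primes (via the Chinese remainder theorem, or simply by taking $t$ coprime to $\bar k$ after a preliminary reduction) then yields the desired primitive vector in $K$.
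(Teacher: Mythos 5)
Your opening move is where the proof breaks. You ``pick a primitive vector $\mathbf u_1\in K$'', but after the reduction to $k=1$ the existence of a vector of $K$ that is primitive in $\Lambda$ is \emph{exactly} the statement to be proved; if you could pick one, you would already be done, so on this reading the argument is circular. Your parenthetical fallback --- take any nonzero vector of $K$ and divide by its greatest divisor --- produces a vector that is primitive in $\Lambda$ but in general does \emph{not} lie in $K$, and then the later step ``the first coordinate of $\mathbf b$ can be shifted freely by integers (because $\mathbf u_1\in K$)'' collapses: the shifts that keep you inside $K$ without changing the projection are only by elements of $K\cap\langle\mathbf u_1\rangle$, which is $\langle k_1'\mathbf u_1\rangle$ for some integer $k_1'$ that may exceed $1$. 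You would then need $\gcd(b_1,k_1',\bar k)=1$, and this does not follow from the hypothesis that the gcd of $K$ is $1$.

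A concrete failure: let $\Lambda=\mathbb Z^3$ and $K=\langle(2,0,0),(0,2,0),(1,0,2)\rangle$, whose elements are the vectors $(2\alpha+\gamma,2\beta,2\gamma)$; the gcd of $K$ is $1$ because $(1,0,2)\in K$ is primitive. Starting from the vector $(2,0,0)\in K$ you get $\mathbf u_1=(1,0,0)\notin K$, $K\cap\langle\mathbf u_1\rangle=\langle(2,0,0)\rangle$, and $\bar K=2\mathbb Z^2$, so $\bar k=2$. Lemma~\ref{gcd2} may legitimately hand you $\bar{\mathbf b}=(2,0)$, whose greatest divisor in $\bar K$ is indeed $2$; but its lifts to $K$ are the vectors $(2\alpha,2,0)$, every one of them divisible by $2$, and adding multiples of $(2,0,0)$ never changes that. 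So your procedure dead-ends even though the conclusion is true (witnessed by $(1,0,2)$). The root cause is that greatest divisors do not pass through quotients: primitivity of $\bar{\mathbf b}/\bar k$ says nothing about the first coordinates of the lifts of $\bar{\mathbf b}$, and your sketched repair --- a vector of $K$ whose first coordinate is prime to $p$ --- cannot be added to $\mathbf b$ without destroying the projection you worked to control; making that interaction precise is exactly the missing argument. The paper sidesteps quotients entirely: it applies Lemma~\ref{gcd2} to the sublattice generated by $\mathbf v_1,\mathbf v_2$ inside the two-dimensional lattice $\Lambda\cap\mathrm{span}(\mathbf v_1,\mathbf v_2)$, obtaining a vector $\mathbf v$ with greatest divisor $\gcd(k_1,k_2)$, and then applies it once more to the sublattice generated by $\mathbf v$ and $\mathbf v_3$; intersecting with a plane preserves greatest divisors, whereas projecting does not.
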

\begin{proof}
Denote the generators of $K$ by $\mathbf v_1,\mathbf v_2,\mathbf v_3$ and its greatest divisors by $k_1,k_2,k_3$, respectively. We have that $k=\gcd(k_1,k_2,k_3)$. Let $M$ be the sublattice generated by $\mathbf v_1$ and $\mathbf v_2$. By Lemma~\ref{gcd2}, there is a vector $\mathbf v$ in $M$ whose greatest divisor is $\gcd(k_1,k_2)$. If we apply Lemma~\ref{gcd2} again to the sublattice generated by $\mathbf v$ and $\mathbf v_3$, we get a vector in $K$, whose greatest divisor is $\gcd(\gcd(k_1,k_2),k_3)=k$.
\end{proof}
We remark that similar statement holds for arbitrary dimensional lattices, and one can prove it by induction on the dimension.

Now we can characterize the cubic sublattices of $\mathbb Z^3$. We show that every cubic sublattice can be obtained from our construction.

\begin{theorem}\label{cubic_constr}
For an arbitrary cubic sublattice $\Gamma\subseteq\mathbb Z^3$, there exist unique positive integers $k$ and $d$ such that $\Gamma=k\Gamma(\mathbf v,d)$ for a primitive vector $\mathbf v\in\mathbb Z^3$.
\end{theorem}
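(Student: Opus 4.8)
The plan is to reduce everything to the uniqueness statement of Theorem~\ref{main} by factoring out the common divisor of the vectors of $\Gamma$.

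First I would let $k$ denote the greatest common divisor of the vectors of $\Gamma$, that is, the largest positive integer for which $\Gamma\subseteq k\mathbb Z^3$. Then $\Gamma'=\frac1k\Gamma$ is again a sublattice of $\mathbb Z^3$, and dividing a cubic basis of $\Gamma$ by $k$ yields a cubic basis of $\Gamma'$; write $d$ for the resulting edge length. By the choice of $k$ the vectors of $\Gamma'$ have greatest common divisor $1$, so Lemma~\ref{gcd3} supplies a primitive vector $\mathbf v\in\Gamma'$. Since $\mathbf v$ lies in a cubic sublattice of edge length $d$, its squared length is divisible by $d^2$, because every vector of $\Gamma'$ is $d$ times an integer combination of a pairwise orthogonal unit triple. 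Hence $\Gamma(\mathbf v,d)$ is defined, and because $\Gamma'$ is a cubic sublattice of edge length $d$ containing the primitive vector $\mathbf v$, the uniqueness part of Theorem~\ref{main} forces $\Gamma'=\Gamma(\mathbf v,d)$, giving $\Gamma=k\Gamma(\mathbf v,d)$.

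For the uniqueness of $k$ and $d$, I would argue that both are intrinsic to $\Gamma$. Because $\Gamma(\mathbf v,d)$ contains the primitive vector $\mathbf v$, the greatest common divisor of its vectors is $1$, so the greatest common divisor of the vectors of $k\Gamma(\mathbf v,d)$ is exactly $k$; thus $k$ is recovered as the gcd of the vectors of $\Gamma$, regardless of which $\mathbf v$ was used. Once $k$ is pinned down, the index of $\Gamma=k\Gamma(\mathbf v,d)$ in $\mathbb Z^3$ equals $k^3d^3$, since $\Gamma(\mathbf v,d)$ has index $d^3$ and passing to $k$ times a lattice multiplies the index by $k^3$. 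As this index depends only on $\Gamma$, the value $d=\bigl([\mathbb Z^3:\Gamma]/k^3\bigr)^{1/3}$ is determined as well.

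I expect the only points requiring care to be the verification that $\frac1k\Gamma$ is a genuine cubic sublattice of $\mathbb Z^3$ with integral edge length and that it contains a primitive vector; both follow cleanly from the definition of $k$ together with Lemma~\ref{gcd3}. It is worth emphasising that the primitive vector $\mathbf v$ itself is \emph{not} unique---distinct primitive vectors of $\Gamma'$ may well generate the same lattice $\Gamma(\mathbf v,d)$---which is precisely why the statement claims uniqueness only for the integers $k$ and $d$.
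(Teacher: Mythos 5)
Your proposal is correct and follows essentially the same route as the paper: take $k$ to be the greatest common divisor of the vectors of $\Gamma$, pass to $\Gamma'=\frac1k\Gamma$, extract a primitive vector via Lemma~\ref{gcd3}, and invoke the uniqueness part of Theorem~\ref{main} to identify $\Gamma'$ with $\Gamma(\mathbf v,d)$. You spell out a few verifications the paper leaves implicit (integrality of the edge length of $\Gamma'$, divisibility of $\|\mathbf v\|^2$ by $d^2$, and the recovery of $k$ and $d$ from the gcd and the index), but the underlying argument is the same.
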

\begin{proof}
We have that $k$ is the greatest common divisor of the vectors of $\Gamma$ and $d$ is the quotient of the edge length of $\Gamma$ and $k$. Consider the cubic sublattice $\Gamma'$ such that $k\Gamma'=\Gamma$.
By Lemma~\ref{gcd3}, there exists a primitive vector $\mathbf v\in\Gamma'$. Theorem~\ref{main} implies $\Gamma'=\Gamma(\mathbf v,d)$, which gives the statement.
\end{proof}
When a cubic sublattice is constructed from a primitive vector, this vector is also a primitive vector in the cubic sublattice. Our next goal is to show that every primitive vector can be such a vector. It requires two lemmas.
\begin{lemma}\label{d^2}
If a cubic sublattice $\Gamma\subseteq\mathbb Z^3$ has edge length $d$, then it contains the vectors divisible by $d^2$, i.e., the inclusion $d^2\mathbb Z^3\subseteq\Gamma$ is fulfilled.
\end{lemma}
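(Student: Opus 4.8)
The plan is to exploit the fact that a cubic basis, rescaled, is an orthonormal frame, so that every integer vector has a clean expansion in it. Let $\{\mathbf a,\mathbf b,\mathbf c\}$ be a cubic basis of $\Gamma$: the three vectors are pairwise orthogonal and each has length $d$, hence $\mathbf a\cdot\mathbf a=\mathbf b\cdot\mathbf b=\mathbf c\cdot\mathbf c=d^2$ and the mixed dot products vanish. Since $\{\mathbf a/d,\mathbf b/d,\mathbf c/d\}$ is then an orthonormal basis of $\mathbb R^3$, any vector $\mathbf w\in\mathbb R^3$ can be written as
\[
\mathbf w=\frac{\mathbf w\cdot\mathbf a}{d^2}\,\mathbf a+\frac{\mathbf w\cdot\mathbf b}{d^2}\,\mathbf b+\frac{\mathbf w\cdot\mathbf c}{d^2}\,\mathbf c.
\]

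The key step is to multiply this expansion by $d^2$ for a vector $\mathbf w\in\mathbb Z^3$, which yields
\[
d^2\mathbf w=(\mathbf w\cdot\mathbf a)\,\mathbf a+(\mathbf w\cdot\mathbf b)\,\mathbf b+(\mathbf w\cdot\mathbf c)\,\mathbf c.
\]
Here I would observe that the three coefficients $\mathbf w\cdot\mathbf a$, $\mathbf w\cdot\mathbf b$, $\mathbf w\cdot\mathbf c$ are all integers, because $\mathbf w,\mathbf a,\mathbf b,\mathbf c$ all lie in $\mathbb Z^3$ and the dot product of integer vectors is an integer. Consequently $d^2\mathbf w$ is an integer linear combination of the basis $\{\mathbf a,\mathbf b,\mathbf c\}$, so $d^2\mathbf w\in\Gamma$.

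Since $\mathbf w\in\mathbb Z^3$ was arbitrary, this establishes $d^2\mathbb Z^3\subseteq\Gamma$, which is the claim. There is no genuine obstacle in this argument: the only point requiring attention is the orthonormality of the rescaled basis, which is exactly the defining property of a cubic basis, and the integrality of the projection coefficients, which follows because all vectors involved are integer vectors. The clean division by $d^2$ is precisely what the equal squared lengths $d^2$ of the cubic basis provide.
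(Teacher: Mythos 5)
Your proof is correct, but it takes a genuinely different route from the paper. The paper deduces the lemma from its own machinery: by Theorem~\ref{cubic_constr}, every cubic sublattice has the form $k\Gamma(\mathbf v,d/k)$, and the inclusion $d^2\mathbb Z^3\subseteq\Gamma(\mathbf v,d)$ is immediate from the defining divisibility conditions of the construction $\Gamma(\mathbf v,d)$ (both $d^2\mathbf a\times\mathbf v$ and $(d^2\mathbf a\times\mathbf v)\times\mathbf v$ are trivially divisible by $d$ and $d^2$ respectively). You instead argue directly from the definition of a cubic basis: expanding an arbitrary $\mathbf w\in\mathbb Z^3$ in the orthonormal frame $\{\mathbf a/d,\mathbf b/d,\mathbf c/d\}$ and clearing denominators gives $d^2\mathbf w=(\mathbf w\cdot\mathbf a)\mathbf a+(\mathbf w\cdot\mathbf b)\mathbf b+(\mathbf w\cdot\mathbf c)\mathbf c$ with integer coefficients, hence $d^2\mathbf w\in\Gamma$. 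Your argument is self-contained and more elementary: it needs neither Theorem~\ref{main} nor the characterization theorem, and it generalizes verbatim to cubic sublattices of $\mathbb Z^n$ in any dimension. What the paper's proof buys is economy in context --- the construction $\Gamma(\mathbf v,d)$ and Theorem~\ref{cubic_constr} were already established, so the lemma follows in one line --- but as a standalone proof of the statement, yours is arguably preferable.
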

\begin{proof}
When $\Gamma=\Gamma(\mathbf v,d)$ for some primitive vector $\mathbf v$, we have $d^2\mathbf a\in\Gamma(\mathbf v,d)$ for any $\mathbf a\in\mathbb Z^3$ by the construction.
In the general case, $\Gamma=k\Gamma(\mathbf v,d/k)$ for some integer $k$ by Theorem~\ref{cubic_constr}. For an arbitrary $\mathbf a\in\mathbb Z^3$, we obtain $d^2/k^2\mathbf a\in\Gamma(\mathbf v,d/k)$, hence $d^2\mathbf a\in \Gamma$.
\end{proof}
The second lemma is a claim from number theory.
\begin{lemma}\label{prime}
For an odd prime number $p$, there exists a primitive vector in $\mathbb Z^3$ whose squared length is divisible by $p^2$.
\end{lemma}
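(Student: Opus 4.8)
The plan is to first relax the primitivity requirement: I would construct integers $a,b,c$, not all divisible by $p$, satisfying $p^2\mid a^2+b^2+c^2$, and only afterwards divide out their greatest common divisor $g$. Since the construction will guarantee that one coordinate is a unit modulo $p$, the gcd $g$ is automatically coprime to $p$, so dividing by $g$ produces a primitive vector whose squared length remains divisible by $p^2$. This reduces the lemma to solving the congruence $a^2+b^2+c^2\equiv0\pmod{p^2}$ with at least one coordinate invertible modulo $p$.

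I would solve this in two stages. First, working modulo $p$, I claim that $x^2+y^2\equiv-1\pmod p$ has a solution $(x_0,y_0)$. This follows from a counting argument: the sets $\{x^2\bmod p\}$ and $\{-1-y^2\bmod p\}$ each contain exactly $(p+1)/2$ residues, and since $(p+1)/2+(p+1)/2>p$ they must overlap, which produces $x_0,y_0$ with $x_0^2+y_0^2\equiv-1\pmod p$. Taking the third coordinate equal to $1$, the triple $(x_0,y_0,1)$ satisfies $x_0^2+y_0^2+1\equiv0\pmod p$ with a unit in the last slot.

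Second, I would lift the third coordinate from modulo $p$ to modulo $p^2$. Keeping $x_0,y_0$ fixed, I seek $z_1$ with $z_1^2\equiv-(x_0^2+y_0^2)\pmod{p^2}$. The right-hand side is $\equiv1\pmod p$, so it is a unit having the square root $z\equiv1$ modulo $p$; writing $z_1=1+tp$ and expanding $z_1^2\equiv1+2tp\pmod{p^2}$ reduces the lift to one linear congruence for $t$ modulo $p$, which is solvable because $2$ is invertible modulo the odd prime $p$. This is the explicit Hensel step, and it yields $z_1\equiv1\pmod p$, hence $p\nmid z_1$, together with $x_0^2+y_0^2+z_1^2\equiv0\pmod{p^2}$.

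Setting $\mathbf v=(x_0,y_0,z_1)$ then gives $p^2\mid\|\mathbf v\|^2$ while $p\nmid z_1$, and dividing by $\gcd(x_0,y_0,z_1)$, which is coprime to $p$, delivers the required primitive vector. I expect the mod-$p$ solvability in the first stage to be the main point of the argument; the Hensel lift and the concluding primitivity cleanup are routine.
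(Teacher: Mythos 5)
Your proposal is correct and follows essentially the same route as the paper: solve $x^2+y^2\equiv-1\pmod p$, lift the third coordinate from $1$ to $1+tp$ via a linear congruence (solvable because $2$ is invertible modulo the odd prime $p$), and finally divide by the greatest common divisor, which is coprime to $p$. The only difference is cosmetic: your single pigeonhole count on the sets $\{x^2\}$ and $\{-1-y^2\}$ handles all odd primes uniformly, whereas the paper splits into cases according to whether $-1$ is a quadratic residue and uses a paired-residue count in the second case.
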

\begin{proof}
It is enough to construct a vector whose squared length is divisible by $p^2$ and which is not divisible by $p$, as we can divide it by its greatest divisor.

If $-1$ is a quadratic residue (when $p=4k+1$ for a positive integer $k$), then there exists a positive integer $x$ such that $x^2=ap-1$ for some integer $a$. As $p$ is odd, there exists integer $b$ such that $-2b\equiv a$ modulo $p$. Then $(0,x,bp+1)$ is a suitable vector.

When $-1$ is not a quadratic residue (if $p=4k+3$), we search for $x,y\in\mathbb Z_p$ such that $x^2+y^2=-1$. If there were no such elements, then each of the pairs 
\[(1,p-2),(2,p-3),\dots,((p-1)/2,(p-1)/2)\]
 would contain only one quadratic residue (in particular the last one would not contain any), which would contradict the well-known fact that there exist $(p-1)/2$ quadratic residues modulo $p$. Denoting the corresponding integers by $x,y$ as well, we obtain that $x^2+y^2=ap-1$ for some integer $a$. As in the previous case, we have an integer $b$ such that $-2b\equiv a$ modulo $p$, thus, the vector $(x,y,bp+1)$ is a suitable vector.
\end{proof}
Lemma~\ref{prime} does not hold for $p=2$. Indeed, the square numbers are congruent $0$ or $1$ modulo $4$, so if $4$ divides the sum of the square of the coordinates, then all of them are even, hence the vector is divisible by $2$.

If we construct a cubic sublattice in a cubic sublattice $\Lambda$ instead of $\mathbb Z^3$, we will use the notion $\Gamma_\Lambda(\mathbf v,d)$ for the cubic sublattice of $\Lambda$ with edge length $d$ which contains the primitive vector $\mathbf v\in\Lambda$.
\begin{theorem}\label{thm:reverse}
For an arbitrary primitive vector $\mathbf v=(v_1,v_2,v_3)\in\mathbb Z^3$ and an odd $d$, there exists a primitive vector $\mathbf u\in\mathbb Z^3$ for which we can choose a cubic basis of the cubic sublattice $\Gamma(\mathbf u,d)$ so that the coordinates of $\mathbf u$ are $(v_1,v_2,v_3)$.
\end{theorem}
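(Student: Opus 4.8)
\section*{Proof proposal for Theorem~\ref{thm:reverse}}

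The plan is to reduce the statement to a purely arithmetic claim about reductions modulo the primes dividing $d$. Observe first that a cubic basis of edge length $d$ is nothing but an integer matrix $B$ with $B^\top B=d^2 I$; its columns span a cubic sublattice $\Gamma$, and the vector with coordinates $(v_1,v_2,v_3)$ in this basis is $\mathbf u=B\mathbf v$, where $\mathbf v=(v_1,v_2,v_3)^\top$. Since $\|\mathbf u\|^2=d^2\|\mathbf v\|^2$ is divisible by $d^2$, if $\mathbf u$ turns out to be primitive then the uniqueness part of Theorem~\ref{main} forces $\Gamma=\Gamma(\mathbf u,d)$, and by construction the coordinates of $\mathbf u$ in this cubic basis are exactly $(v_1,v_2,v_3)$. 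Hence everything reduces to the following: \emph{given a primitive $\mathbf v$ and an odd $d$, find an integer matrix $B$ with $B^\top B=d^2I$ such that $B\mathbf v$ is primitive.} As $B\mathbf v$ is primitive iff $B\mathbf v\not\equiv\mathbf 0\pmod p$ for every prime $p$, and for $p\nmid d$ the matrix $B$ is invertible modulo $p$ (so $B\mathbf v\not\equiv\mathbf 0$ because $\mathbf v$ is primitive), it suffices to control the odd primes dividing $d$.

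Next I would exploit that cubic bases are closed under matrix multiplication: if $B_1^\top B_1=d_1^2I$ and $B_2^\top B_2=d_2^2I$, then $(B_1B_2)^\top(B_1B_2)=d_1^2d_2^2I$. Writing $d=p_1^{a_1}\cdots p_r^{a_r}$, I build $B=B_r\cdots B_1$ as a product of cubic bases $B_i$ of edge length $p_i^{a_i}$, chosen one at a time. Every factor $B_l$ with $l\neq i$ is invertible modulo $p_i$, so multiplying by it neither creates nor destroys divisibility by $p_i$. Setting $\mathbf v^{(i)}=B_i\cdots B_1\mathbf v$, the vector $\mathbf v^{(i-1)}$ is therefore always nonzero modulo $p_i$ (it is an invertible image of the primitive $\mathbf v$), and it is enough to pick each $B_i$ so that $B_i\mathbf v^{(i-1)}\not\equiv\mathbf 0\pmod{p_i}$; this last property then survives all later multiplications, which are invertible modulo $p_i$, so $B\mathbf v\not\equiv\mathbf 0\pmod{p_i}$. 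This decouples the problem into one independent task per prime power.

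The heart of the matter is the following single-prime claim: \emph{for an odd prime $p$, an exponent $a$, and a nonzero $\mathbf w\in\mathbb F_p^3$, there is an integer matrix $B$ with $B^\top B=p^{2a}I$ and $B\mathbf w\not\equiv\mathbf 0\pmod p$.} The key structural observation is how an edge-$p$ cubic basis looks modulo $p$: its three columns have squared length $p^2\equiv0$ and are pairwise orthogonal, hence are three pairwise orthogonal isotropic vectors of $\mathbb F_p^3$; since, for isotropic $\mathbf a$, the only isotropic vectors in $\mathbf a^\perp$ form the line $\langle\mathbf a\rangle$, they all lie on one isotropic line. Thus $B\equiv\mathbf a\,\mathbf g^\top\pmod p$ for isotropic vectors $\mathbf a,\mathbf g$ (the isotropy of $\mathbf g$ coming from $BB^\top\equiv0$), the image line $\langle\mathbf a\rangle$ equals the line spanned by the primitive generating vector of the sublattice, and $B\mathbf w\equiv(\mathbf g\cdot\mathbf w)\mathbf a\pmod p$. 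So I only need a cubic basis whose covector $\mathbf g$ spans an isotropic line not orthogonal to $\mathbf w$.

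I would produce such bases in two steps. First, every isotropic line occurs as an image line: starting from a primitive $\mathbf z$ with $p\mid\|\mathbf z\|^2$, a correction $\mathbf z+p\mathbf t$ (solving one linear congruence, using $\mathbf z\not\equiv\mathbf 0$, and then dividing out its greatest divisor, which is prime to $p$) makes the squared length divisible by $p^2$ without moving the line, and $\Gamma(\cdot,p)$ then has this image line, in the spirit of Lemma~\ref{prime} and Theorem~\ref{cubic_constr}. Passing to the transpose $B^\top$ (again a cubic basis, with image and covector interchanged, since the image of $B^\top$ is the row space $\langle\mathbf g\rangle$ of $B$) shows that every isotropic line also occurs as a covector. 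Since $\mathbf w^\perp$ contains at most two of the $p+1\ge4$ isotropic lines, I can choose a cubic basis $B^{(1)}$ whose covector satisfies $\mathbf g^{(1)}\cdot\mathbf w\neq0$; for $a=1$ this finishes the claim. For $a\ge2$ I chain $a$ edge-$p$ bases, $B=B^{(a)}\cdots B^{(1)}$, choosing each covector line distinct from the preceding image line; then each consecutive scalar $\mathbf g^{(i)}\cdot\mathbf a^{(i-1)}$ is nonzero (distinct isotropic lines are non-orthogonal), the rank-one reductions do not collapse, and $B\equiv(\text{unit})\,\mathbf a^{(a)}(\mathbf g^{(1)})^\top\pmod p$, so $B\mathbf w\not\equiv\mathbf 0$. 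The main obstacle I expect is precisely this single-prime claim — pinning down the rank-one isotropic structure modulo $p$ and realizing arbitrary isotropic covector lines via the transpose argument and the mod-$p^2$ correction. The oddness of $d$ enters exactly here, since for $p=2$ there is no primitive isotropic vector at all, as noted after Lemma~\ref{prime}; the reduction, the multiplicativity, and the prime-by-prime decoupling are routine once the claim is in hand.
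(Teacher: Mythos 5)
Your proposal is correct, but it reaches Theorem~\ref{thm:reverse} by a genuinely different route than the paper, and the correspondence between the two is worth making explicit. You recast a cubic basis as an integer matrix $B$ with $B^\top B=d^2I$ and reduce the whole theorem to producing such a $B$ with $B\mathbf v$ primitive; the paper instead works with nested sublattices. For a single odd prime $p$, the paper takes an auxiliary primitive $\mathbf w$ with $p^2\mid\|\mathbf w\|^2$ (Lemma~\ref{prime}), forces $p\nmid\mathbf v\cdot\mathbf w$ by a sign flip, and exhibits the desired cubic sublattice as $d^2\mathbb Z^3$ sitting inside $\Gamma(\mathbf w,d)$, with $\mathbf u=d^2\mathbf v$ read in $\Gamma(\mathbf w,d)$-coordinates and primitivity supplied by Lemma~\ref{p1}. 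In your matrix language this is exactly the transpose trick: if $C$ is a cubic basis matrix of $\Gamma(\mathbf w,d)$, then $C^{-1}=C^\top/d^2$, so $d^2\mathbb Z^3$ expressed in $C$-coordinates is spanned by the columns of $C^\top$; hence the paper's cubic basis is your $B=C^\top$, and its condition $p\nmid\mathbf v\cdot\mathbf w$ is precisely your condition that the covector line of $B$ be non-orthogonal to $\mathbf v$ modulo $p$. Where the two arguments genuinely diverge is in how this condition is secured and how composite $d$ is handled: you classify edge-$p$ cubic bases modulo $p$ (rank one, $B\equiv\mathbf a\,\mathbf g^\top$ with both lines isotropic), realize \emph{every} isotropic line via the Hensel-type correction $\mathbf z\mapsto\mathbf z+p\mathbf t$ plus transposition, win by counting ($p+1\ge4$ isotropic lines, at most two of them in $\mathbf v^\perp$), and then decouple the primes of $d$ through invertibility of the other factors and chain edge-$p$ blocks for prime powers; the paper needs only one isotropic vector (the explicit quadratic-residue construction of Lemma~\ref{prime}), gets non-orthogonality by flipping one sign, and treats composite $d$ by recursive nesting of cubic sublattices together with the uniqueness part of Theorem~\ref{main}. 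Your route buys a sharper structural picture --- a classification of cubic bases modulo $p$ and the realizability of every isotropic direction, which strengthens Lemma~\ref{prime} --- at the price of invoking standard finite-field geometry (isotropy of nondegenerate ternary forms, a smooth conic over $\mathbb F_p$ having exactly $p+1$ points, a line meeting it in at most two); these facts are well known but should be proved or referenced in a final write-up, whereas the paper's longer lattice bookkeeping needs nothing beyond quadratic residues.
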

\begin{proof}
First we show that it is enough to prove the theorem for odd prime number $d$.
Applying the theorem for a vector $\mathbf v=(v_1,v_2,v_3)\in\mathbb Z^3$ and $d_1$ yields a primitive vector $\mathbf u\in\mathbb Z^3$. If we apply again the theorem for $\mathbf u$ and for $d_2$, we get a primitive vector $\mathbf t\in\mathbb Z^3$.
The sublattice $\Gamma_{\Gamma(\mathbf t,d_2)}(\mathbf u,d_1)$ is a cubic sublattice with edge length $d_1d_2$, where the coordinates of the vector $\mathbf t$ are $(v_1,v_2,v_3)$. The uniqueness part of Theorem~\ref{main} implies that this sublattice is $\Gamma(\mathbf t,d_1d_2)$. In the following, we suppose that $d$ is an odd prime number.

As $\mathbf v$ is primitive, $d$ does not divide at least one of its coordinates, let it be the first coordinate $v_1$.
Lemma~\ref{prime} provides a primitive vector $\mathbf w=(w_1,w_2,w_3)$ whose squared length is divisible by $d^2$. Reordering the coordinates, we can achieve that $d$ does not divide the first coordinate $w_1$. Set $\tilde{\mathbf w}=(-w_1,w_2,w_3)$. Now one of the dot products $\mathbf v\cdot\mathbf w$ or $\mathbf v\cdot\tilde{\mathbf w}$ is not divisible by $d$, otherwise $d$ would divide their difference $2v_1w_1$, which would contradict our assumptions on $v_1$ and $w_1$ as $d$ is odd. Assume that $\mathbf v\cdot\mathbf w$  is not divisible by $d$.

Finally, construct the cubic sublattice $\Gamma(\mathbf w,d)$. 
By Lemma~\ref{d^2}, it contains the vector $d^2\mathbf v$. Consider this vector as a vector of the cubic sublattice $\Gamma(\mathbf w,d)$, and denote it by $\mathbf u$.
This vector is primitive since $d\mathbf v$ is not contained in $\Gamma(\mathbf w,d)$ by Lemma~\ref{p1}.
The sublattice $d^2\mathbb Z^3$ is a cubic sublattice in $\Gamma(\mathbf w,d)$ with edge length $d$ and it contains $\mathbf u$. Thus, the cubic sublattice $\Gamma_{\Gamma(\mathbf w,d)}(\mathbf u,d)$ is $d^2\mathbb Z^3$. Choosing the basis $(d^2\mathbf e^1,d^2\mathbf e^2,d^2\mathbf e^3)$ of $d^2\mathbb Z^3$, we have that the coordinates of $\mathbf u$ are $(v_1,v_2,v_3)$.
\end{proof}
Theorem~\ref{thm:reverse} allows us to formulate the converse of the number-theoretic corollary mentioned in the Introduction. If an integer $m$ is a sum of three coprime squares, then for an odd $d$, the number $d^2m$ is also a sum of three coprime squares. The assumption on the oddness of $d$ is necessary, as a number divisible by $4$ cannot be a sum of three coprime squares by the investigation of the remainders modulo $4$.

\section{Cubic sublattices as partially ordered set}\label{poset}
The inclusion gives a partial order over the cubic sublattices of $\mathbb Z^3$. This makes the set of the cubic sublattices to a partially ordered set. We decide whether this set is a lattice in the algebraic sense.

Similarly to the cubic sublattices, we can investigate the square sublattices of $\mathbb Z^2$. A square sublattice can be characterized by the invariance of the rotation of $90^\circ$. Therefore the intersection and the union of square sublattices are also square sublattices. These are the supremum and the infimum of the given square sublattices, so the partially ordered set of the square sublattices of $\mathbb Z^2$ forms a lattice in algebraic sense.

We show that the partially ordered set of the cubic sublattices of $\mathbb Z^3$ is not a lattice in algebraic sense. The squared length of $\mathbf v=(1,2,2)$ is $9=3^2$, so there exists the cubic sublattice $\Gamma(\mathbf v,3)\leq\mathbb Z^3$. 
\begin{figure}[h]
\centering
\includegraphics{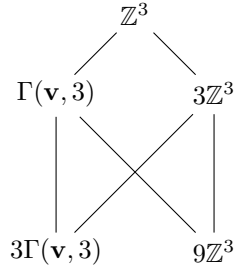}
\caption{This example shows that the partially ordered set of the cubic sublattices of $\mathbb Z^3$ does not form a lattice in algebraic sense.}
\label{fig:poset}
\end{figure}
We have the inclusions $9\mathbb Z^3\leq3\mathbb Z^3\leq\mathbb Z^3$ and $3\Gamma(\mathbf v,3)\leq\Gamma(\mathbf v,3)\leq\mathbb Z^3$, and from the last one, $3\Gamma(\mathbf v,3)\leq3\mathbb Z^3$, see Figure~\ref{fig:poset}. Lemma~\ref{d^2} gives $9\mathbb Z^3\leq\Gamma(\mathbf v,3)$. In all of the mentioned inclusions, the (relatively) edge lengths of the cubic sublattices are $3$, so there is not any cubic sublattice between the inclusions. This means that there is no supremum of $3\Gamma(\mathbf v,3)$ and $9\mathbb Z^3$. And there is no infimum of $\Gamma(\mathbf v,3)$ and $3\mathbb Z^3$.

However, the partially ordered set of such cubic sublattices that contain a given primitive vector $\mathbf v\in\mathbb Z^3$ is a lattice in algebraic sense. If the squared length of $\mathbf v$ is $kd^2$, where $k$ is square-free, then this lattice is isomorphic to the lattice of the divisors of $d$.

\section*{Acknowledgment}
The author is immensely grateful for G\'abor Moussong, who recommended this topic and proposed to search an elementary proof for Theorem~\ref{main}.

\end{document}